\newcommand{\dalg}{\mathsf{Alg}}
\newcommand{\dfrm}{\mathsf{Frm}}
\newcommand{\vl}{\models}
\newcommand{\yy}{\rightarrow}
\newcommand{\power}{\mathcal{P}}
\newcommand{\nsystem}{\mathcal{N}}
\newcommand{\iand}{\bigwedge}
\newcommand{\ior}{\bigvee}
\newcommand{\ua}{\mbox{$\uparrow$}}
\newcommand{\thn}{\ \Rightarrow\ }
\newcommand{\eq}{\ \Leftrightarrow\ }
\newcommand{\gm}{\Gamma}
\newcommand{\dl}{\Delta}
\newcommand{\logic}[1]{\mathsf{#1}}
\newcommand{\barcan}{\forall x\Box\phi\supset\Box\forall x\phi}
\newcommand{\qflt}[1]{\mathcal{QF}_{#1}}
\newcommand{\nec}{\Box}
\newcommand{\nmodel}[1]{\mathit{#1}}
\newcommand{\nval}[1]{v_{#1}}
\newcommand{\nass}{\mathcal{A}}
\newcommand{\nint}{\mathcal{I}}
\newcommand{\ndom}{\mathcal{D}}
\newcommand{\ninta}{\mathcal{J}}
\newcommand{\lcong}[1]{\cong_{\logic{#1}}}
\newcommand{\lind}[1]{A(\logic{#1})}
\newcommand{\variable}{\mathsf{V}}
\newcommand{\predsym}{\mathsf{Pred}}
\newcommand{\prmt}{\mathrm{MT}}
\newcommand{\prtp}{\mathrm{TP}}
\newcommand{\prcf}{\mathrm{CF}}
\newcommand{\eko}{\mathsf{E}}
\newcommand{\cko}{\mathsf{C}}
\newcommand{\deffrm}{\mathscr{C}}
\newcommand{\defformula}{\mathscr{L}}
\newcommand{\qglo}{\logic{QGL}_{\Diamond^{\ast}}}
\newcommand{\nqgl}{\mathsf{NQGL}}
\newcommand{\psqglo}{\mathrm{PS}_{\logic{QGL}_{\Diamond^{\ast}}}}
\newcommand{\cglo}{\mathcal{C}_{\mathsf{GL}_{\Diamond^{\ast}}}}
\newcommand{\ccklm}{\mathcal{C}_{\mathsf{CKL}^{-}}}
\newcommand{\acklm}{\mathcal{A}_{\mathsf{CKL}^{-}}}
\newcommand{\psqckl}{\mathrm{PS}_{\logic{QCKL}}}
\newcommand{\psqcklm}{\mathrm{PS}_{\logic{QCKL}^{-}}}
\newcommand{\psck}{\mathrm{CK}}
\newcommand{\qcklm}{\logic{QCKL}^{-}}
\numberwithin{equation}{section}
\theoremstyle{plain}
\newtheorem{theorem}{Theorem}[section]
\newtheorem{lemma}[theorem]{Lemma}
\newtheorem{corollary}[theorem]{Corollary}
\theoremstyle{definition}
\newtheorem{definition}[theorem]{Definition}
\newtheorem{example}[theorem]{Example}
\date{}
\begin{document}
\title[Models for predicate modal 
logics with $\omega$-rules]
{Neighborhood and algebraic models for predicate modal logics with $\omega$-rules}
\author{Yoshihito Tanaka}{Kyushu Sangyo University}
\keywords{Neighborhood model, Modal logic, 
Predicate logic, $\omega$-rule}

\maketitle

\begin{abstract}
This paper investigates neighborhood and algebraic models for
predicate modal logics with $\omega$-rules, including non-normal cases. 
We establish sufficient conditions under which 
such logics
have neighborhood models with constant domains
and satisfy the completeness theorem with respect to 
neighborhood frames with constant domains. 
Related results for normal modal logics with $\omega$-rules  
were obtained 
by Tanaka \cite{tnknoncpt}, while similar results for 
non-normal modal logics without $\omega$-rules were presented 
by Arl\'{o}-Costa and Pacuit \cite{arlcst-pct06} and 
by Tanaka \cite{tnk22}. 
The results presented here extend these works. 
As applications, we 
prove that a predicate extension of $\logic{GL}$
is sound and complete with respect to a class of neighborhood frames 
with constant domains, 
and that a predicate common knowledge logic is Kripke incomplete but 
neighborhood complete.  
\end{abstract}

\section{Introduction}

This paper investigates neighborhood and algebraic models for
predicate modal logics with $\omega$-rules, including non-normal cases. 
We establish sufficient conditions under which 
such logics
have neighborhood models with constant domains
and satisfy the completeness theorem with respect to 
neighborhood frames with constant domains. 
Related results for normal modal logics with $\omega$-rules  
were obtained 
by Tanaka \cite{tnknoncpt}, while similar results for 
non-normal modal logics without $\omega$-rules were presented 
by Arl\'{o}-Costa and Pacuit \cite{arlcst-pct06} and 
by Tanaka \cite{tnk22}. 
The results presented here extend these works. 
As applications, we 
prove that a predicate extension of $\logic{GL}$
is sound and complete with respect to a class of neighborhood frames 
with constant domains, 
and that a predicate common knowledge logic is Kripke incomplete but 
neighborhood complete.

This paper discusses modal logics that admit $\omega$-rules, that 
is, inference rules with countably many premises.  
It is known that the use of $\omega$-rules plays a crucial 
role in the axiomatization of certain predicate modal logics, because 
natural predicate extensions of some propositional modal 
logics are not computably enumerable, even when 
the underlying propositional modal logics
are decidable. 
One example of this phenomenon is 
the provability logic $\logic{GL}$. 
It is well known that $\logic{GL}$ is decidable and that 
it defines 
the class $\mathcal{CW}$ of conversely well-founded Kripke frames  
and is sound and complete with respect to $\mathcal{CW}$. 
However, 
the predicate modal logic defined by the class $\mathcal{CW}$
is not computably enumerable 
\cite{ryb01,ryb24}. 
Another example is 
the common knowledge logic. 
The common knowledge logic is 
a multimodal logic with modal operators
$\cko$ and $\eko$, and is characterized by the class  $\mathcal{C}$ of Kripke 
frames such that $R_{\cko}=\bigcup_{n\in\omega}\left(R_{\eko}\right)^{n}$, 
where $R_{\cko}$ and $R_{\eko}$ denote accessibility relations 
corresponding to the modal operators $\cko$ and $\eko$, respectively. 
Although the propositional common knowledge logic is decidable 
\cite{hlp-mss92,mey-vdh95}, 
the predicate modal logic characterized by the class $\mathcal{C}$ of Kripke frames 
with constant domains is not computably enumerable \cite{wlt00}.

The model existence theorem 
for any propositional normal modal logic $\logic{L}$
with $\omega$-rules satisfying condition \eqref{countable} 
was established by Goldblatt \cite{gld93} and Segerberg \cite{sgb94}. 
Specifically, they proved that there exists a Kripke model $M$ satisfying 
\eqref{modelinfmeet} such that 
the set of formulas valid in $M$ is exactly $\logic{L}$. 
Subsequently, Tanaka \cite{tnknoncpt} showed that 
any predicate normal modal logic containing the Barcan formula $\barcan$ 
and 
$\omega$-rules satisfying \eqref{countable} 
has a Kripke model with constant domains that satisfies \eqref{modelinfmeet}. 
In this paper, we prove the existence of neighborhood models with constant domains
that satisfy \eqref{modelinfmeet}
for both normal and non-normal predicate modal logics with $\omega$-rules 
satisfying \eqref{countable},
without assuming the Barcan formula.

This paper uses neighborhood frames not only to interpret
non-normal modal logics, 
but also to provide constant domain semantics for predicate modal logics
without assuming 
the Barcan formula. 
It is well known that neighborhood frames serve as a semantic framework for 
non-normal modal logics, 
that is, modal logics that do not validate 
$\Box\top\equiv\top$ or $\Box(p\land q)\equiv \Box p\land\Box q$. 
If we define $\iand\emptyset$ as $\top$, this can be summarized 
by saying that 
neighborhood frames can provide semantics for modal logics 
without assuming distributivity of the modal operator over
finite conjunctions. 
In fact, neighborhood frames can be used to interpret 
infinitary modal logics in which the distributivity of the modal operator over 
conjunctions of cardinality $\kappa$ does not hold, 
for any infinite cardinal $\kappa$ \cite{mnr16,tnk21}. 
When considering constant domain semantics, 
the same techniques can be naturally applied to predicate modal logics,
because in the canonical construction of 
predicate modal logics, quantifiers are interpreted as infinitary conjunctions
and disjunctions.  
Indeed, neighborhood frames provide 
constant domain semantics for both normal and non-normal  predicate modal logics without 
assuming the Barcan formula \cite{arlcst-pct06, tnk22}. 
We extend these results to predicate modal logics with $\omega$-rules.

Our arguments rely on several properties of modal algebras, 
such as the Q-filters \cite{rsw-skr63} 
or a J\'{o}nsson-Tarski style representation for 
modal algebras and neighborhood frames \cite{tnk22}, that is, 
a representation that preserves countably many 
infinite meets and joins.
One of the main tools in our approach is the fact 
that every neighborhood frame 
is equivalent to its complex modal algebra as semantics 
for predicate modal formulas. 
This is a natural extension of the well-known relationship
between Kripke frames, their associated complex normal modal algebras, 
and propositional formulas (see, e.g., \cite{blc-rjk-vnm01}). 
Although this equivalence 
is a straightforward generalization of a familiar fact, 
we are not aware of any source in which it is stated explicitly. 
To make the argument self-contained, we provide a proof here.

The paper is organized as follows: 
In Section~\ref{sec:pre}, we recall basic properties. 
In Section~\ref{sec:eqv}, we show that each neighborhood 
frame and the complex modal algebra of it are equivalent 
as models of predicate modal formulas. 
In Section~\ref{sec:logic}, we introduce 
predicate modal logics with $\omega$-rules. 
In Section~\ref{sec:mex}, we show the model existence theorem 
and the completeness theorem. 
In Section~\ref{sec:glcompleteness},
we prove the completeness of a predicate extension of $\logic{GL}$ with respect 
to neighborhood frames with constant domains. 
In Section~\ref{sec:cklincompleteness},
we prove the existence of  
predicate and propositional common knowledge logics that 
are neighborhood complete but Kripke incomplete.

\section{Preliminaries}\label{sec:pre}

In this section, we establish the conventions for terminology 
and notation, and review some fundamental concepts. 
For simplicity, we discuss the unimodal case, 
but it is straightforward to generalize the results presented here 
to the case with countably many modal operators.

For any set $X$, we write $|X|$ for the cardinality of $X$.  
Let $\langle W,\leq\rangle$ be a partially ordered set.  
For any $X\subseteq W$, 
we denote the upward closure of $X$ by
$\ua X$. 
That is, 
\[
\ua X=\{w\in W\mid \exists x\in X(x\leq w)\}. 
\]
Let $f\colon A\yy B$ be a mapping from a set $A$ to a set $B$. 
For any set $X\subseteq A$ and $Y\subseteq B$, the sets 
$f\left[X\right]$ and $f^{-1}\left[Y\right]$ are defined as follows: 
\[
f\left[X\right]
=
\{f(x)\in B\mid x\in X\},\ \ 
f^{-1}\left[Y\right]
=
\{x\in A\mid f(x)\in Y\}. 
\]

\begin{definition}\label{neighborhoodframes}
A {\em neighborhood frame} is a pair
$\langle C, \nsystem\rangle$, where
$C$ is a nonempty set and 
$\nsystem$ maps each $c\in C$ to a subset $\nsystem(c)$ of $\power(C)$. 
A neighborhood frame 
$\langle C, \nsystem\rangle$ 
is said to be 
{\em monotonic}, 
{\em topped}, or
{\em closed under finite intersections}
(denoted by $\prmt$, $\prtp$, or $\prcf$, respectively),
if it satisfies the following conditions: 
\begin{description}
\item[MT]
for any $c\in C$, $\nsystem(c)$ is an upward closed subset of 
$\power(C)$ ordered by inclusion. That is, 
$\ua\nsystem(c)=\nsystem(c)$ for any $c\in C$;
\item[TP]
for any $c\in C$, $\nsystem(c)$ contains $C$; 
\item[CF]
for any $c\in C$, if $X,\ Y\in \nsystem(c)$ then 
$X\cap Y\in \nsystem(c)$.  
\end{description}

A neighborhood frame $\langle C,\nsystem\rangle$ is called a {\em Kripke frame} if it 
satisfies $\prmt$ and the condition that 
$$
\bigcap \nsystem(c)\in\nsystem(c)
$$ 
for any $c\in C$. 
If $\langle C,\nsystem\rangle$ is a Kripke frame, 
we define the accessibility relation $R$ on $C$ by 
$(x,y)\in R
\eq
y\in\bigcap \nsystem(x)
$ 
for any $x$ and $y$ in $C$. 
\end{definition}

\begin{definition}
An algebra $\langle A;\lor,\land,-,\nec,0,1\rangle$ is 
a {\em modal  algebra}, 
if its reduct $\langle A;\lor,\land,-,0,1\rangle$ is a Boolean algebra 
and $\nec$ is a unary operator on $A$. 
A modal algebra is said to be {\em complete} if its underlying Boolean 
algebra is complete. 
A complete modal algebra is {\em completely multiplicative}, 
if 
$$
\iand_{x\in X}\Box x=\Box\iand_{x\in X} x
$$
holds, for any $X\subseteq A$. 
A modal algebra $A$ 
is said to be 
{\em monotonic}, 
{\em topped}, or  
{\em closed under finite intersections}
(denoted by $\prmt$, $\prtp$, or $\prcf$, respectively), 
if it satisfies the following conditions: 
\begin{description}
\item[MT]
for any $x$ and $y$ in $A$, 
$\nec (x\land y)\leq \nec x\land\nec y$; 
\item[TP]
$\nec 1=1$; 
\item[CF]
for any $x$ and $y$ in $A$, 
$
\nec x\land \nec y\leq\nec(x\land y)
$. 
\end{description}
Note that 
each of $\prmt$ and $\prcf$
can be defined by a single equation, and 
$\prmt$ is equivalent to 
\begin{equation*}
x\leq y\thn \nec x\leq\nec y.  
\end{equation*}
Let $A$ and $B$ be modal algebras. 
A mapping $f\colon A\yy B$ is 
a {\em homomorphism of modal algebras}, if $f$ is a homomorphism 
of Boolean algebras such that 
$
f(\nec x)=\nec f(x)
$
for any $x\in A$. 
\end{definition}

\begin{definition}
Let $A$ be a Boolean algebra. A nonempty subset $F\subseteq A$ is 
a {\em filter} of $A$, if it satisfies the following conditions: 
\begin{enumerate}
\item
$\ua F=F$;
\item
$x,\ y\in F\thn x\land y\in F$, for any $x$ and $y$ in $A$. 
\end{enumerate}
A filter is {\em proper} if $0\not\in F$. 
A proper filter $F$ is said to be {\em prime} if 
$x\lor y\in F$ implies that either $x\in F$ or $y\in F$
for all $x$, $y$ in $A$. 
\end{definition}

\begin{definition}
(Rasiowa-Sikorski \cite{rsw-skr63}). 
Let $A$ be a Boolean algebra and   
let $S\subseteq \power(A)$. 
A prime filter $F$ is said to be a {\em Q-filter for $S$}, if 
it satisfies the following condition: 
for any $X\in S$,  
if 
$\iand X\in A$ and 
$X\subseteq F$
then
$\iand X\in F$. 
That is, $F$ is closed under existing meets of sets in $S$. 
\end{definition}

We write $\qflt{S}(A)$ for the set of all 
Q-filters for $S$ in the algebra $A$. 
The following lemma is called the Rasiowa-Sikorski lemma 
\cite{rsw-skr63}.

\begin{lemma} \label{ba-ipft}
(Rasiowa-Sikorski \cite{rsw-skr63}). 
Let $A$ be a
Boolean algebra and  
let $S$ be a countable subset of $\power(A)$. 
Then,
for any $a$ and $b$ in $A$ with $a\not\leq b$,    
there exists a Q-filter $F$ for $S$ 
such that $a\in F$ and $b\not\in F$. 
\end{lemma}

\begin{definition}\label{qfltnfr}
Let $A$ be a modal algebra and 
$S\subseteq\power(A)$.  
A neighborhood frame $\langle C,\nsystem\rangle$ is called a 
{\em Q-filter neighborhood frame of $A$ for $S$} 
if 
$C=\qflt{S}(A)$ and $\nsystem$ satisfies the following:
\begin{equation}\label{nonmonotonic1}
\nsystem(F)\supseteq
\left\{
\{G\in\qflt{S}(A)\mid x\in G\}
\mid
\Box x\in F
\right\}
\end{equation}
and 
\begin{equation}\label{nonmonotonic2}
\nsystem(F)\cap
\left\{
\{G\in\qflt{S}(A)\mid x\in G\}
\mid
\Box x\not\in F
\right\}=\emptyset
\end{equation}
for every $F\in\qflt{S}(A)$. 
A Q-filter neighborhood frame of $A$ for $S$ with neighborhood system $\nsystem$
is denoted by $\dfrm_{S,\nsystem}(A)$. 
\end{definition}

\begin{lemma}
Suppose $A$ satisfies $\prmt$. 
Then, \eqref{nonmonotonic1} and \eqref{nonmonotonic2} are 
equivalent to the following condition: 
for each $F\in\qflt{S}(A)$, 
\begin{equation}\label{dfrmcond}
\Box^{-1}[F]=\bigcup_{X\in\nsystem(F)}\bigcap X. 
\end{equation}
\end{lemma}

\begin{proof}
Let $F\in\qflt{S}(A)$. Then, 
\begin{align*}
&\text{\eqref{nonmonotonic1} \& \eqref{nonmonotonic1}}\\
&\eq
\forall x\in A\left(
\Box x\in F
\eq
\{G\in\qflt{S}(A)\mid x\in G\}\in\nsystem(F)
\right)\\
&\eq
\forall x\in A\left(
\Box x\in F
\eq
\exists X\in \nsystem(F) 
\left(
X\subseteq\{G\in\qflt{S}(A)\mid x\in G\}
\right)
\right)
&& \text{($\prmt$)}\\
&\eq
\forall x\in A\left(
\Box x\in F
\eq
\exists X\in \nsystem(F) 
\left(
x\in\bigcap X
\right)
\right)\\
&\eq
\forall x\in A\left(
x\in\Box^{-1}[F]
\eq
x\in\bigcup_{X\in\nsystem(F)}\bigcap X
\right)\\ 
&\eq
\eqref{dfrmcond}. 
\end{align*}
\end{proof}

\begin{lemma}\label{canonicalsystem}
Let $A$ be a modal algebra and  
$S$ be a countable subset of $\power(A)$. 
Let 
\begin{equation*}\label{dfrmdef}
\nsystem(F)=
\left\{
\{G\in\qflt{S}(A)\mid x\in G\}
\mid
\Box x\in F
\right\}
\end{equation*}
for any $F\in\qflt{S}(A)$. 
Then, 
$\nsystem$ satisfies \eqref{nonmonotonic1} and \eqref{nonmonotonic2}.
If $A$ satisfies $\prmt$, then
\begin{equation}\label{monotonesystem}
\nsystem(F)=
\ua\left\{
\{G\in\qflt{S}(A)\mid x\in G\}
\mid
\Box x\in F
\right\}
\end{equation}
also satisfies 
\eqref{nonmonotonic1} and \eqref{nonmonotonic2}.
\end{lemma}

\begin{proof}
It is trivial that $\nsystem$ satisfies \eqref{nonmonotonic1}. 
We show \eqref{nonmonotonic2}. 
Suppose that $\Box x\not\in F$. 
If 
$\{G\in\qflt{S}(A)\mid x\in G\}\in\nsystem(F)$, there exists 
$y\in F$ such that $\Box y\in F$ and 
\begin{equation}\label{nflemma}
\{G\in\qflt{S}(A)\mid x\in G\}=\{G\in\qflt{S}(A)\mid y\in G\}.
\end{equation}
Since $x\not=y$ and $S$ is countable, Lemma \ref{ba-ipft} implies that 
there exists $G\in\qflt{S}(A)$ such that either 
$x\in G$ and $y\not\in G$, or 
$x\not\in G$ and $y\in G$. This contradicts \eqref{nflemma}.

Suppose that 
$A$ satisfies $\prmt$. 
We show \eqref{nonmonotonic2}. 
Suppose that $\Box x\not\in F$ and 
$\{G\in\qflt{S}(A)\mid x\in G\}\in\nsystem(F)$. 
Then, there exists $\Box y\in F$ such that 
\begin{equation}\label{canonicitylemma}
\{G\in\qflt{S}(A)\mid y\in G\}\subseteq
\{G\in\qflt{S}(A)\mid x\in G\}. 
\end{equation}
Since $A$ is monotone, 
$y\not\leq x$.
By Lemma \ref{ba-ipft}, there exists 
$G\in\qflt{S}(A)$ such that 
$x\not\in G$ and 
$y\in G$. 
This contradicts \eqref{canonicitylemma}. 
\end{proof}

\begin{theorem}(Tanaka \cite{tnk22}).\label{matonfr}
Let $A$ be a  modal algebra,  
$S$ be a countable subset of $\power(A)$, 
and $\nsystem$ be a neighborhood system 
defined in Lemma \ref{canonicalsystem}
(if $A$ is monotonic, $\nsystem$ is defined by \eqref{monotonesystem}). 
If $A$ satisfies properties among 
$\prmt$, $\prtp$, and $\prcf$, 
then each Q-filter neighborhood frame 
$\langle\qflt{S}(A),\nsystem\rangle$
of $A$ for $S$ 
satisfies the properties corresponding to those of $A$. 
\end{theorem}

\begin{definition}\label{nfrma}
(Do\v{s}en \cite{dsn89}).
Let $Z=\langle C,\nsystem\rangle$ be a
neighborhood frame. 
Define the {\em complex modal algebra} of $Z$, which is denoted by
$\dalg(Z)$,  by 
$$
\dalg(Z)=
\langle
\power(C);\cup,\cap,C\setminus-,\nec_{Z},\emptyset,C
\rangle, 
$$
where
$$
\nec_{Z}X=\{c\in C\mid X\in\nsystem(c)\}
$$
for any $X\subseteq C$.
\end{definition}

\begin{lemma}(Do\v{s}en \cite{dsn89}, Tanaka \cite{tnk22}).\label{nfrtoma} 
Let $Z=\langle C,\nsystem\rangle$ be a
neighborhood frame. 
If $Z$ satisfies 
properties among 
$\prmt$, $\prtp$, and $\prcf$,  
then 
the complex modal algebra $\dalg(Z)$ of $Z$
satisfies the properties corresponding to those of $Z$. 
\end{lemma}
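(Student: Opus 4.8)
The plan is to treat the three closure conditions one at a time; since the hypothesis only asserts that \emph{some} subset of $\{\prmt,\prtp,\prcf\}$ holds for $Z$, it suffices to prove each of the three implications ``$Z$ has $\prmt$ (resp.\ $\prtp$, $\prcf$) $\thn$ $\dalg(Z)$ has $\prmt$ (resp.\ $\prtp$, $\prcf$)'' independently. Throughout, recall that in $\dalg(Z)$ the Boolean structure is the set-theoretic one, so $\land$ is $\cap$, $\leq$ is $\subseteq$, and $1$ is $C$, and that $\nec_{Z}X=\{c\in C\mid X\in\nsystem(c)\}$ by Definition~\ref{nfrma}.

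For $\prmt$ I would use the equivalent monotonicity formulation recorded after the definition of modal algebras: assuming each $\nsystem(c)$ is upward closed, if $X\subseteq Y$ and $c\in\nec_{Z}X$ then $X\in\nsystem(c)$, so $Y\in\nsystem(c)$ by upward closure, whence $c\in\nec_{Z}Y$; thus $\nec_{Z}X\subseteq\nec_{Z}Y$. For $\prtp$: if $C\in\nsystem(c)$ for every $c$, then $\nec_{Z}C=\{c\in C\mid C\in\nsystem(c)\}=C$, i.e.\ $\nec_{Z}1=1$. For $\prcf$: if each $\nsystem(c)$ is closed under finite intersections and $c\in\nec_{Z}X\cap\nec_{Z}Y$, then $X,Y\in\nsystem(c)$, so $X\cap Y\in\nsystem(c)$, i.e.\ $c\in\nec_{Z}(X\cap Y)$; hence $\nec_{Z}X\cap\nec_{Z}Y\subseteq\nec_{Z}(X\cap Y)$.

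There is no real obstacle here: the whole argument is a direct unwinding of the definition of $\nec_{Z}$, and the only point worth flagging is the convenience of passing to the monotonicity form of $\prmt$ (rather than the literal inequality $\nec(x\land y)\leq\nec x\land\nec y$, which one would establish the same way) when reading off $\prmt$ for the complex algebra from upward closure of the neighborhoods. This mirrors the analogous passage, for Kripke frames, from frame properties to properties of the associated complex normal modal algebra.
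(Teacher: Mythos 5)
Your proof is correct: each of the three implications follows by directly unwinding the definition $\nec_{Z}X=\{c\in C\mid X\in\nsystem(c)\}$ exactly as you do, and using the monotonicity form of $\prmt$ is the natural way to read off that property from upward closure of the $\nsystem(c)$. The paper itself states this lemma without proof, citing Do\v{s}en and Tanaka \cite{dsn89,tnk22}, and your argument is the standard verification given there, so there is nothing further to reconcile.
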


If $Z$ is a Kripke frame, then $\dalg(Z)$ is completely multiplicative.

\begin{theorem}(Tanaka \cite{tnk22}).\label{extjt}
Let $A$ be a modal algebra and  
$S$ be a countable subset of  $\power(A)$. 
Let $Z=\dfrm_{S,\nsystem}(A)$ be a Q-filter neighborhood frame of $A$ for $S$. 
Define 
$
f\colon  A\yy \dalg(Z)
$
by 
\begin{equation}\label{embedding}
f(x)=\{F\in\qflt{S}(A)\mid x\in F\},  
\end{equation}
for any $x\in A$. 
Then, $f$
is a monomorphism of modal algebras 
and satisfies 
\begin{equation}\label{finfmeet}
f\left(\iand X\right)=\iand f[X],
\end{equation}
for any $X\in S$ such that $\iand X\in A$. 
Suppose that $\nsystem$ is a neighborhood system 
defined in Lemma \ref{canonicalsystem}
(if $A$ is monotonic, $\nsystem$ is defined by \eqref{monotonesystem}). 
Then, if $A$
satisfies properties among $\prmt$, $\prtp$, and $\prcf$, 
then
$\dalg(\dfrm_{S,\nsystem}(A))$ satisfies the same properties as $A$. 
\end{theorem}

\begin{proof}
We prove only that $f(\Box x)=\Box_{Z} f(x)$ for every $x\in A$. 
Then, 
\begin{align*}
F\in f(\Box x)
&\eq
\Box x\in F\\
&\eq
f(x)\in\nsystem(F)
&&
\text{\eqref{nonmonotonic1} \& \eqref{nonmonotonic2}}\\
&\eq
F\in \Box_{Z}f(x)
\end{align*}
For the remainder of the proof, see \cite{tnk22}. 
\end{proof}

\begin{corollary}
Let $A$ be a modal algebra, 
$S$ be a countable subset of  $\power(A)$, 
and $C=\qflt{S}(A)$. 
Let $Z=\langle C,\nsystem\rangle$ be a neighborhood frame 
and 
$
f\colon  A\yy \dalg(Z)
$ be a function defined by \eqref{embedding}. 
Then, $\nsystem$ satisfies 
\eqref{nonmonotonic1} and \eqref{nonmonotonic2} if and only if 
$f(\Box x)=\Box_{Z}f(x)$ for every $x\in A$.
\end{corollary}

\begin{proof}
Suppose that $x\in A$ and $f(\Box x)=\Box_{Z}f(x)$. Then, 
\begin{align*}
\Box x\in F
&\eq
F\in f(\Box x)\\
&\eq
F\in \Box_{Z}f(x)\\
&\eq
f(x)\in\nsystem(F). 
\end{align*}
Hence, 
\eqref{nonmonotonic1} and \eqref{nonmonotonic2} hold. 
The converse is  proved in the proof of Theorem \ref{extjt}. 
\end{proof}

\section{Equivalence of neighborhood frames 
and their complex modal algebras}\label{sec:eqv}

In this section, we show that each neighborhood 
frame and the complex modal algebra of it are equivalent 
as models of predicate modal formulas.

\begin{definition}
The language of predicate modal formulas consists 
of the following symbols:  
\begin{enumerate} 
\item
a countable set $\variable$ of variables;

\item
$\top$ and $\bot$;

\item the logical connectives: 
$\land$,  $\neg$;

\item
the quantifier:
$\forall$;

\item
for each $n\in\omega$,   
a countable set $\predsym(n)$ of 
predicate symbols of arity $n$ 
(a predicate symbol of arity $0$ is called a {\em propositional variable});

\item
the modal operator:
$\nec$.
\end{enumerate}
\end{definition}

\begin{definition}
The set $\Phi$ of predicate modal formulas is 
the smallest set that satisfies:

\begin{enumerate}
\item
$\top$ and $\bot$ are in $\Phi$;

\item 
if $P\in\predsym(n)$ 
and 
$x_{1},\ldots,x_{n}\in\variable$ 
then
$P(x_{1},\ldots,x_{n})\in\Phi$, 
for each $n\in\omega$;

\item 
if $\phi$ and $\psi$ are in $\Phi$
then $(\phi\land\psi)\in \Phi$;

\item 
if $\phi\in\Phi$ 
then $(\neg\phi)$ and 
$(\nec\phi)$  are in $\Phi$;

\item
if $\phi\in\Phi$ 
and $x\in\variable$ 
then $(\forall x\phi)\in\Phi$. 
\end{enumerate}
\end{definition}

An {\em atomic formula} is a formula of the form
$P(x_{1},\ldots,x_{n})$ for some $n\in\omega$, 
$P\in\predsym(n)$, and 
$x_{1},\ldots,x_{n}\in\variable$.
We use the standard definition of free and bound variables for formulas. 
A formula $\phi$ is said to be {\em closed} if it contains no free 
variables.

The symbols $\lor$, $\supset$,  and $\exists$ are defined in the usual way.  
We write 
$\phi\equiv\psi$  and $\Diamond\phi$ to abbreviate
$(\phi\supset\psi)\land(\psi\supset\phi)$ and $\neg\nec\neg\phi$, 
respectively. 
For each formula $\phi$ and $n\in\omega$, 
$\Box^{0}\phi$ denotes $\phi$ and $\Box^{n+1}\phi$ denotes $\Box(\Box^{n}\phi)$. 
We define $\Diamond^{n}\phi$ in the same way. 
Let 
$x$ and $y$
be variables. 
For each formula $\phi$, we define
$[y/x]\phi$ as
the instance of substituting 
$y$ for the 
free occurrences of 
$x$ in $\phi$. 
We write $p$, $q,\ldots$ for propositional variables.

\begin{definition}
A {\em neighborhood model} for predicate modal logics is a 4-tuple 
$\langle C,\nsystem,\ndom,\nint\rangle$, 
where $\langle C,\nsystem\rangle$ is a neighborhood frame, 
$\ndom$ is a nonempty set called the {\em domain}, 
and 
$\nint$ is a mapping 
called the {\em interpretation} such that 
for each $n\in\omega$,  each $P\in\predsym(n)$, and each $c\in C$, 
$\nint$ maps $(c,P)$ to an $n$-ary relation
$P^{\nint}(c)\subseteq \ndom^{n}$ over $\ndom$.  
An {\em assignment} $\nass$ to $\ndom$ 
is a mapping from $\variable$ to 
$\ndom$. 
For any assignment $\nass$, 
any variable 
$x$, 
and any 
$d\in\ndom$,  
define an assignment $[d/x]\nass$
as follows: 
$$
[d/x]\nass(z)
=
\begin{cases}
\nass(z) & \text{if $z\not= x$}\\
d & \text{if $z= x$} 
\end{cases}.
$$
For each neighborhood model 
$\nmodel{M}=\langle C,\nsystem,\ndom,\nint\rangle$ and 
each assignment $\nass$, 
the valuation $\nval{\nint,\nass}$ of a formula 
$\phi\in\Phi$ on $\nmodel{M}$ is 
defined inductively as follows:
\begin{enumerate}
\item
$\nval{\nint,\nass}(\top)=C$, 
$\nval{\nint,\nass}(\bot)=\emptyset$; 

\item 
$
\nval{\nint,\nass}(P(x_{1},\ldots,x_{n}))
=
\{c\mid 
(\nass(x_{1}),\ldots,\nass(x_{n}))\in P^{\nint}(c)\}$, 
for any $n\in\omega$, 
$P\in\predsym(n)$, and  $x_{1},\ldots,x_{n}\in\variable$;

\item 
$\nval{\nint,\nass}(\phi\land\psi)=
\nval{\nint,\nass}(\phi)\cap\nval{\nint,\nass}(\psi)$;

\item 
$\nval{\nint,\nass}(\neg\phi)=
C\setminus \nval{\nint,\nass}(\phi)$;

\item 
$\nval{\nint,\nass}(\forall x\phi)=
\bigcap_{d\in \ndom}\nval{\nint,[d/x]\nass}(\phi)$;

\item 
$\nval{\nint,\nass}(\nec\phi)=
\{c\mid\nval{\nint,\nass}(\phi)\in\nsystem(c)\}$. 
\end{enumerate}
A neighborhood model $\nmodel{M}=\langle C,\nsystem,\ndom,\nint\rangle$ 
is said to satisfy $\prmt$, $\prtp$, or $\prcf$, if 
its underlying neighborhood frame $\langle C,\nsystem\rangle$  
satisfies the corresponding property. 
\end{definition}

Let 
$\nmodel{M}=\langle C,\nsystem,\ndom,\nint\rangle$ be a neighborhood model, 
let $\nass$ be an assignment for $\nmodel{M}$, and 
let $\phi\in\Phi$. 
For all assignments $\nass'$ for $\nmodel{M}$, 
if 
$
\nass(x)=\nass'(x)
$
for every free variable $x$ in $\phi$,  
then $\nval{\nint,\nass}(\phi)=\nval{\nint,\nass'}(\phi)$. 
Hence, if $\phi$ has no free variables
then $\nval{\nint,\nass}(\phi)=\nval{\nint,\nass'}(\phi)$ 
for any assignments $\nass$ and $\nass'$ for $\nmodel{M}$.

Let 
$\nmodel{M}=\langle C,\nsystem,\ndom,\nint\rangle$ be a neighborhood model.  
For any $\phi\in\Phi$ and $c\in C$, 
we write 
$
c\vl_{\nmodel{M}}\phi
$,
if 
$c\in\nval{\nint,\nass}(\phi)$ 
for any assignment $\nass$.   
If 
$
c\vl_{\nmodel{M}}\phi
$
for every $c\in C$, 
we write 
$
\nmodel{M}\vl\phi
$.  
Let 
$Z=\langle C,\nsystem\rangle$ be a neighborhood frame. 
We write 
$
Z\vl\phi
$,   
if 
for any domain $\ndom$ and any interpretation $\nint$, 
the neighborhood model $\nmodel{M}=\langle C,\nsystem,\ndom,\nint\rangle$ 
satisfies 
$
\nmodel{M}\vl\phi
$. 
Let $\gm$ be a set of formulas. 
If 
$
Z\vl\phi
$ 
for any $\phi\in\gm$, 
we write 
$
Z\vl\gm
$.
Let $\mathcal{C}$ be a class of neighborhood frames. 
We write $\mathcal{C}\vl\phi$ if $Z\vl\phi$ for every $Z\in \mathcal{C}$, 
and write 
$\mathcal{C}\vl\gm$ if $\mathcal{C}\vl\phi$ for every $\phi\in\gm$. 
We write $\deffrm(\gm)$ for the class of neighborhood frames defined by 
$$
\deffrm(\gm)
=
\{
Z\mid
Z\vl\gm
\}.
$$
Let $\mathcal{C}$ be a class of neighborhood frames. 
We write $\defformula(\mathcal{C})$ for the set of formulas defined by 
$$
\defformula(\mathcal{C})
=
\{
\phi\mid
\mathcal{C}\vl\phi
\}.
$$

\begin{definition}
An {\em algebraic model} for predicate modal logics 
is a triple $\langle A,\ndom,\ninta\rangle$, where $A$ is a complete 
modal algebra, $\ndom$ is a nonempty set, and $\ninta$ maps each 
$n$-ary predicate symbol to a mapping $P^{\ninta}\colon \ndom^{n}\yy A$. 
Let $\nass$ be an assignment to $\ndom$. 
The function $u_{\ninta,\nass}$ from the set $\Phi$ of formulas to $A$ 
is defined inductively as follows: 
\begin{enumerate}
\item
$u_{\ninta,\nass}(\top)=1$, $u_{\ninta,\nass}(\bot)=0$;

\item
$u_{\ninta,\nass}(P(x_{1},\ldots,x_{n}))
=
P^{\ninta}(\nass(x_{1}),\ldots,\nass(x_{n}))
$
for any 
$n\in\omega$, 
$P\in\predsym(n)$, and  $x_{1},\ldots,x_{n}\in\variable$;

\item
$u_{\ninta,\nass}(\phi\land\psi)=u_{\ninta,\nass}(\phi)\land u_{\ninta,\nass}(\psi)$;

\item
$u_{\ninta,\nass}(\neg\phi)=-u_{\ninta,\nass}(\phi)$;

\item
$u_{\ninta,\nass}(\forall x\phi)=\iand_{d\in\ndom}u_{\ninta,[d/x]\nass}\left(\phi\right)$; 

\item
$u_{\ninta,\nass}(\Box\phi)=\Box u_{\ninta,\nass}(\phi)$. 
\end{enumerate}
\end{definition}

Let 
$A$ be a complete modal algebra. 
We write 
$
A\vl\phi
$,   
if 
$u_{\ninta,\nass}(\phi)=1$ for any $\ndom$, $\ninta$, and $\nass$. 
Let $\gm$ be a set of formulas. We write 
$
A\vl\gm
$, 
if 
$
A\vl\phi
$ 
for any $\phi\in\gm$. 
Other semantic terminology and notation 
for algebraic models 
are defined in the same way 
as neighborhood models.

\begin{lemma}\label{nmodeltoamodel}
Let $Z=\langle C,\nsystem\rangle$ be a neighborhood frame and 
$\nmodel{M}=\langle C,\nsystem,\ndom,\nint\rangle$ be a neighborhood model. 
Define an algebraic model $\nmodel{A}^{\nmodel{M}}=\langle \dalg(Z),\ndom,\ninta\rangle$ by 
\begin{equation}\label{dualint}
c\in P^{\ninta}(d_{1},\ldots,d_{n})
\eq
(d_{1},\ldots,d_{n})\in P^{\nint}(c). 
\end{equation}
for any n-ary predicate symbol $P$. 
Then, 
$u_{\ninta,\nass}(\phi)=\nval{\nint,\nass}(\phi)$ for any formula $\phi$ and 
any assignment $\nass$. 
\end{lemma}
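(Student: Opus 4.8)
The plan is to argue by induction on the structure of the formula $\phi$, showing simultaneously for all assignments $\nass$ that $u_{\ninta,\nass}(\phi)=\nval{\nint,\nass}(\phi)$ as subsets of $C$. The base cases $\top,\bot$ are immediate from the definitions, since $1=C$ and $0=\emptyset$ in $\dalg(Z)$. For an atomic formula $P(x_1,\ldots,x_n)$, unwinding the definition of $u_{\ninta,\nass}$ gives $P^{\ninta}(\nass(x_1),\ldots,\nass(x_n))$, which by the defining equivalence \eqref{dualint} is exactly $\{c\mid(\nass(x_1),\ldots,\nass(x_n))\in P^{\nint}(c)\}$, i.e.\ $\nval{\nint,\nass}(P(x_1,\ldots,x_n))$.

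For the inductive step, each connective is handled by matching the algebraic operation in $\dalg(Z)$ against the set operation in the neighborhood semantics. For $\phi\land\psi$: $u_{\ninta,\nass}(\phi\land\psi)=u_{\ninta,\nass}(\phi)\land u_{\ninta,\nass}(\psi)$, which is $\cap$ in $\dalg(Z)$, so by the induction hypothesis this equals $\nval{\nint,\nass}(\phi)\cap\nval{\nint,\nass}(\psi)=\nval{\nint,\nass}(\phi\land\psi)$. For $\neg\phi$: the algebraic complement $-$ in $\dalg(Z)$ is $C\setminus(-)$, so $u_{\ninta,\nass}(\neg\phi)=C\setminus u_{\ninta,\nass}(\phi)=C\setminus\nval{\nint,\nass}(\phi)=\nval{\nint,\nass}(\neg\phi)$. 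For $\nec\phi$: $u_{\ninta,\nass}(\nec\phi)=\nec_{Z}\,u_{\ninta,\nass}(\phi)=\{c\in C\mid u_{\ninta,\nass}(\phi)\in\nsystem(c)\}$, and by the induction hypothesis $u_{\ninta,\nass}(\phi)=\nval{\nint,\nass}(\phi)$, so this is $\{c\mid\nval{\nint,\nass}(\phi)\in\nsystem(c)\}=\nval{\nint,\nass}(\nec\phi)$.

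The quantifier case is where one must be slightly careful, though it is still routine. By definition $u_{\ninta,\nass}(\forall x\phi)=\iand_{d\in\ndom}u_{\ninta,[d/x]\nass}(\phi)$, where the meet is taken in the complete Boolean algebra $\dalg(Z)$; since meets in $\dalg(Z)=\power(C)$ are just intersections, this is $\bigcap_{d\in\ndom}u_{\ninta,[d/x]\nass}(\phi)$. Applying the induction hypothesis to $\phi$ with the assignment $[d/x]\nass$ for each $d\in\ndom$ gives $\bigcap_{d\in\ndom}\nval{\nint,[d/x]\nass}(\phi)=\nval{\nint,\nass}(\forall x\phi)$. The only point worth noting is that the induction hypothesis is stated for \emph{all} assignments, which is exactly what is needed here since $[d/x]\nass$ ranges over a whole family of assignments; this is why the induction is set up uniformly in $\nass$ from the start. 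I do not anticipate any genuine obstacle: the lemma is precisely the statement that the algebraic model $\nmodel{A}^{\nmodel{M}}$ built on the complex algebra computes the same values as $\nmodel{M}$, and the proof is a clause-by-clause comparison of two definitions whose clauses were designed to match. The mild subtlety is making sure the completeness of $\dalg(Z)$ (which holds because $\power(C)$ is a complete Boolean algebra) is what justifies forming $\iand_{d\in\ndom}$ in clause~(5) of the algebraic valuation, so that the expression on the algebraic side is well defined before one even compares it to the neighborhood side.
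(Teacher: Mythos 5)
Your proof is correct and follows essentially the same route as the paper: a structural induction on $\phi$, carried out uniformly over all assignments, with the atomic case handled by the defining equivalence \eqref{dualint} and each connective matched clause by clause against the corresponding operation of $\dalg(Z)$ (the paper likewise only spells out the atomic, $\forall$, and $\Box$ cases and calls the rest straightforward). Your additional remark about the completeness of $\dalg(Z)=\power(C)$ justifying the meet in the quantifier clause is a sensible observation but does not change the argument.
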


\begin{proof}
Induction on $\phi$. 
The case $\phi=P(x_{1},\ldots,x_{n})$: 
\begin{align*}
u_{\ninta,\nass}(P(x_{1},\ldots,x_{n}))
&=
P^{\ninta}(\nass(x_{1}),\ldots,\nass(x_{n}))\\
&=
\left\{c\mid
\left(\nass(x_{1}),\ldots,\nass(x_{n})\right)\in P^{\nint}(c)\right\}\\
&=
v_{\nint,\nass}(P(x_{1},\ldots,x_{n})). 
\end{align*}

\noindent
The case $\phi=\forall x\psi$: 
\begin{align*}
u_{\ninta,\nass}(\forall x\psi)
&=
\bigcap_{d\in\ndom}
u_{\ninta,[d/x]\nass}
\left(\psi\right)\\
&=
\bigcap_{d\in\ndom}
v_{\nint,[d/x]\nass}
\left(\psi\right)
&&
\text{(by induction hypothesis)}\\
&=
\nval{\nint,\nass}(\forall x\psi).
\end{align*}

\noindent
The case $\phi=\Box \psi$: 
\begin{align*}
u_{\ninta,\nass}(\Box \psi)
&=
\{c\in C\mid u_{\ninta,\nass}(\psi)\in\nsystem(c)\}\\
&=
\{c\in C\mid v_{\nint,\nass}(\psi)\in\nsystem(c)\}
&&
\text{(by induction hypothesis)}\\
&=
\nval{\nint,\nass}(\Box\psi).
\end{align*}
Other cases are straightforward. 
\end{proof}

\begin{lemma}\label{amodeltonmodel}
Let $Z=\langle C,\nsystem\rangle$ be a neighborhood frame
and 
$\nmodel{A}=\langle \dalg(Z),\ndom,\ninta\rangle$ be an algebraic model. 
Let 
$\nmodel{M}^{\nmodel{A}}=\langle C,\nsystem,\ndom,\nint\rangle$
be a neighborhood model, where $\nint$ is defined by \eqref{dualint}. 
Then, 
$\nval{\nint,\nass}(\phi)=u_{\ninta,\nass}(\phi)$ 
for any formula $\phi$ and 
any assignment $\nass$. 
\end{lemma}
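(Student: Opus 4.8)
The plan is to mirror the proof of Lemma~\ref{nmodeltoamodel}, since Lemma~\ref{amodeltonmodel} is essentially its converse: we are handed an algebraic model built on the complex algebra $\dalg(Z)$, we read off a neighborhood interpretation $\nint$ from the algebraic interpretation $\ninta$ via the same equivalence~\eqref{dualint}, and we must check the two valuations agree on every formula. First I would note that~\eqref{dualint} does define a legitimate neighborhood interpretation: for each $n$-ary $P$ and each tuple $(d_{1},\ldots,d_{n})\in\ndom^{n}$, the element $P^{\ninta}(d_{1},\ldots,d_{n})$ is an element of the underlying Boolean algebra of $\dalg(Z)$, namely a subset of $C$, so setting $P^{\nint}(c)=\{(d_{1},\ldots,d_{n})\mid c\in P^{\ninta}(d_{1},\ldots,d_{n})\}$ gives a well-defined $n$-ary relation on $\ndom$ for each $c\in C$. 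Then the proof proceeds by induction on the structure of $\phi$, exactly as before.

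The atomic case is the translation of~\eqref{dualint}: $\nval{\nint,\nass}(P(x_{1},\ldots,x_{n}))=\{c\mid(\nass(x_{1}),\ldots,\nass(x_{n}))\in P^{\nint}(c)\}=\{c\mid c\in P^{\ninta}(\nass(x_{1}),\ldots,\nass(x_{n}))\}=P^{\ninta}(\nass(x_{1}),\ldots,\nass(x_{n}))=u_{\ninta,\nass}(P(x_{1},\ldots,x_{n}))$. The Boolean cases ($\top$, $\bot$, $\land$, $\neg$) reduce to the fact that the Boolean operations of $\dalg(Z)$ are literally $\cup$, $\cap$, and complementation in $C$, which are exactly the set-theoretic operations used in the clauses defining $\nval{\nint,\nass}$. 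The quantifier case $\phi=\forall x\psi$ uses that the meet $\iand$ in the complete Boolean algebra $\power(C)$ is intersection, so $u_{\ninta,\nass}(\forall x\psi)=\iand_{d\in\ndom}u_{\ninta,[d/x]\nass}(\psi)=\bigcap_{d\in\ndom}\nval{\nint,[d/x]\nass}(\psi)=\nval{\nint,\nass}(\forall x\psi)$ by the induction hypothesis. (Here one should remark that $\dalg(Z)$ is complete, so the infinitary meet is well-defined; this is needed for $\langle\dalg(Z),\ndom,\ninta\rangle$ to be an algebraic model in the first place, and is guaranteed since $\power(C)$ is always a complete Boolean algebra.) The modal case $\phi=\Box\psi$ uses the definition $\nec_{Z}X=\{c\in C\mid X\in\nsystem(c)\}$ from Definition~\ref{nfrma}: $u_{\ninta,\nass}(\Box\psi)=\nec_{Z}(u_{\ninta,\nass}(\psi))=\{c\in C\mid u_{\ninta,\nass}(\psi)\in\nsystem(c)\}=\{c\in C\mid\nval{\nint,\nass}(\psi)\in\nsystem(c)\}=\nval{\nint,\nass}(\Box\psi)$, again invoking the induction hypothesis in the middle step.

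There is no real obstacle here: the statement is a routine structural induction, and in fact it is the formal dual of Lemma~\ref{nmodeltoamodel}. The only points worth flagging explicitly are (i) checking that $\nint$ defined by~\eqref{dualint} really is an interpretation, which is immediate, and (ii) being careful that in the $\forall$-clause the algebraic side uses the genuine Boolean meet of $\dalg(Z)$, which coincides with $\bigcap$ because the carrier is a powerset; after that every inductive step is a one-line unwinding of definitions together with the induction hypothesis, so I would simply write "the proof is identical to that of Lemma~\ref{nmodeltoamodel}, reading the equalities from right to left" and spell out at most the atomic, $\forall$, and $\Box$ cases for the reader's convenience.
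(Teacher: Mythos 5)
Your proposal is correct and matches the paper's proof, which simply states that the result follows by induction on $\phi$ (mirroring the argument of Lemma~\ref{nmodeltoamodel}); your spelled-out atomic, $\forall$, and $\Box$ cases are exactly the intended unwinding of the definitions.
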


\begin{proof}
Induction on $\phi$.  
\end{proof}

\begin{theorem}\label{modelduality}
Let $Z=\langle C,\nsystem\rangle$ be a neighborhood frame. 
For any formula $\phi$, 
$$
Z\vl\phi
\eq
\dalg(Z)\vl\phi. 
$$
\end{theorem}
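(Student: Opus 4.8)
The plan is to prove both directions of the biconditional by reducing the statement about frames to the already-established lemmas on the level of models, namely Lemmas~\ref{nmodeltoamodel} and~\ref{amodeltonmodel}. Recall that $Z\vl\phi$ means that for \emph{every} domain $\ndom$ and \emph{every} interpretation $\nint$, the neighborhood model $\langle C,\nsystem,\ndom,\nint\rangle$ validates $\phi$ at every point, and that $\dalg(Z)\vl\phi$ means that for every domain $\ndom$ and every interpretation $\ninta$ into $\dalg(Z)$, one has $u_{\ninta,\nass}(\phi)=1$ for every assignment $\nass$. Since the underlying Boolean algebra of $\dalg(Z)$ is the powerset $\power(C)$, its top element is $C$; hence $u_{\ninta,\nass}(\phi)=1$ is literally the statement $u_{\ninta,\nass}(\phi)=C$, which is what makes the two notions line up.

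First I would prove the direction $Z\vl\phi\thn\dalg(Z)\vl\phi$. Suppose $Z\vl\phi$, and let $\nmodel{A}=\langle\dalg(Z),\ndom,\ninta\rangle$ be an arbitrary algebraic model and $\nass$ an arbitrary assignment. Form the neighborhood model $\nmodel{M}^{\nmodel{A}}=\langle C,\nsystem,\ndom,\nint\rangle$ with $\nint$ defined by~\eqref{dualint}; this is exactly the setup of Lemma~\ref{amodeltonmodel}, so $u_{\ninta,\nass}(\phi)=\nval{\nint,\nass}(\phi)$. Because $Z\vl\phi$, the model $\nmodel{M}^{\nmodel{A}}$ satisfies $\nmodel{M}^{\nmodel{A}}\vl\phi$, i.e. $\nval{\nint,\nass}(\phi)=C$ (a point $c$ lies in $\nval{\nint,\nass'}(\phi)$ for every $\nass'$, in particular for $\nass$, and this holds for all $c$). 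Therefore $u_{\ninta,\nass}(\phi)=C=1$ in $\dalg(Z)$. Since $\ndom,\ninta,\nass$ were arbitrary, $\dalg(Z)\vl\phi$.

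For the converse $\dalg(Z)\vl\phi\thn Z\vl\phi$, suppose $\dalg(Z)\vl\phi$, and let $\nmodel{M}=\langle C,\nsystem,\ndom,\nint\rangle$ be an arbitrary neighborhood model with $\nass$ an arbitrary assignment. Form the algebraic model $\nmodel{A}^{\nmodel{M}}=\langle\dalg(Z),\ndom,\ninta\rangle$ via~\eqref{dualint}, as in Lemma~\ref{nmodeltoamodel}, giving $\nval{\nint,\nass}(\phi)=u_{\ninta,\nass}(\phi)$. By hypothesis $u_{\ninta,\nass}(\phi)=1=C$, so $\nval{\nint,\nass}(\phi)=C$; in particular every $c\in C$ lies in $\nval{\nint,\nass}(\phi)$, and as $\nass$ was arbitrary this gives $c\vl_{\nmodel{M}}\phi$ for all $c$, i.e. $\nmodel{M}\vl\phi$. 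Since $\nmodel{M}$ was arbitrary, $Z\vl\phi$.

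The argument is essentially bookkeeping: the only thing one has to be slightly careful about is the quantifier over assignments hidden inside the definitions of $Z\vl\phi$ and $A\vl\phi$, and the trivial but crucial observation that the top of $\dalg(Z)$ is $C$ so that ``evaluates to $1$'' and ``evaluates to the whole frame'' coincide. I do not anticipate a genuine obstacle here; the content of the theorem has already been done in Lemmas~\ref{nmodeltoamodel} and~\ref{amodeltonmodel}, and this statement is just the frame-level packaging of those model-level equalities. If anything, the one point worth spelling out is that the correspondences $\nint\mapsto\ninta$ and $\ninta\mapsto\nint$ given by~\eqref{dualint} are mutually inverse bijections between interpretations over $\ndom$ into $C$ and interpretations over $\ndom$ into $\dalg(Z)$, so that ranging over all of one is the same as ranging over all of the other — but for the proof as structured above even that is not strictly needed, since each direction only uses one of the two constructions.
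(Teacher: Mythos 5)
Your proof is correct and follows essentially the same route as the paper: both directions are reduced to the model-level equalities of Lemmas~\ref{nmodeltoamodel} and~\ref{amodeltonmodel} via the correspondence \eqref{dualint}, with the same lemma used for the same direction. The only difference is that you argue the implications directly while the paper argues the contrapositives, which is not a substantive change.
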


\begin{proof}
First, suppose $Z\not\vl\phi$. Then, there exists a domain $\ndom$, 
an interpretation $\nint$, an assignment $\nass$, and 
$c\in C$ such that $c\not\in \nval{\nint,\nass}(\phi)$. 
By Lemma \ref{nmodeltoamodel}, 
$c\not\in u_{\ninta,\nass}(\phi)$. 
Hence, $u_{\ninta,\nass}(\phi)\not=C$. 
Therefore, $\dalg(Z)\not\vl\phi$.  
Next, suppose $\dalg(Z)\not\vl\phi$. Then, there exists a domain $\ndom$, 
an interpretation $\ninta$,  an assignment $\nass$, and 
$c\in C$ such that 
$c\not\in u_{\ninta,\nass}(\phi)$. 
Then, 
$
c\not\in \nval{\nint,\nass}(\phi)
$
by Lemma \ref{amodeltonmodel}. 
Hence, $Z\not\vl\phi$.  
\end{proof}

\section{Predicate modal logics with $\omega$-rules}\label{sec:logic}

In this section, we introduce predicate modal logics with $\omega$-rules. 
We define a logic as a set of formulas satisfying certain closure properties, 
rather than by formal systems.

\begin{definition}\label{pmlogic}
A set $\logic{L}\subseteq\Phi$ is called 
a {\em predicate modal logic}, if it 
contains all classical predicate tautologies, is closed under modus ponens, 
uniform substitution of formulas (see \cite{gbb-skv-shh12}), 
and satisfies the following conditions: 
\begin{enumerate}
\item
for any $\phi\in\Phi$ and $x\in\variable$, 
if $\phi\in\logic{L}$ then $\forall x\phi\in\logic{L}$;
\item\label{congrule}
for any $\phi$ and $\psi$ in $\Phi$, 
if $\phi\equiv\psi\in\logic{L}$, then $\nec\phi\equiv\nec\psi\in\logic{L}$. 
\end{enumerate}
A predicate modal logic $\logic{L}$ 
is said to be 
{\em monotonic}, 
{\em topped}, or  
{\em closed under finite intersections},  
which are denoted by $\prmt$, $\prtp$, or $\prcf$, 
if 
$\nec(p\land q)\supset\nec p\land \nec q\in\logic{L}$,
$\nec\top\in\logic{L}$,
or
$\nec p\land\nec q\supset\nec(p\land q)\in\logic{L}$,  respectively. 
A predicate  modal logic $\logic{L}$ is {\em normal} if it satisfies 
$\prmt$, $\prtp$, and $\prcf$. 
A predicate modal logic $\logic{L}$ is {\em consistent} if $\bot\not\in\logic{L}$. 
Let $\mathcal{C}$ be a class of neighborhood frames. 
A predicate modal logic $\logic{L}$ is {\em sound} with respect to $\mathcal{C}$
if $\logic{L}\subseteq\defformula(\mathcal{C})$, 
and {\em complete} with respect to $\mathcal{C}$ if the converse holds. 
A predicate modal logic $\logic{L}$ is said to be {\em neighborhood 
complete}, if there exists a class $\mathcal{C}$ of neighborhood frames 
such that $\logic{L}$ is sound and complete with respect to $\mathcal{C}$. 
It is clear that 
a predicate modal logic $\logic{L}$ is neighborhood complete 
if and only if 
$$
\logic{L}=\defformula(\deffrm(\logic{L})). 
$$
\end{definition}

We consider predicate modal logics that can be 
axiomatized by 
countably many pairs consisting of a set of axiom schemata and an 
$\omega$-rule of the following form: 
\begin{equation}\label{omega-rule}
\alpha\supset\beta_{i}\ \text{($i\in\omega$)},\ \hspace{10pt} 
\dfrac{p\supset\beta_{i}\ \text{($i\in\omega$)}}{p\supset\alpha}, 
\end{equation}
where $\beta_{i}$ ($i\in\omega$) and $\alpha$ are closed formulas. 
Intuitively, \eqref{omega-rule} is a proof-theoretic expression 
of the formula  $\alpha\equiv\iand_{i\in\omega}\beta_{i}$ of infinitary 
modal logic. 
Note that the axioms and the $\omega$-rule in \eqref{omega-rule} are 
schemata.
That is, the instances of any uniform substitution of formulas for 
atomic formulas in \eqref{omega-rule} 
are also considered to be axioms and inference rules. 
More precisely, 
let $\mathsf{Sub}$ be a set of functions from $\Phi$ to $\Phi$ that 
uniformly substitute formulas of $\Phi$ 
for atomic formulas. 
A predicate modal logic $\logic{L}$ is said to {\em admit} a pair of the form
\eqref{omega-rule} if it satisfies the following conditions: 
\begin{enumerate}
\item
for each $s\in\mathsf{Sub}$, 
$s(\alpha\supset\beta_{i})\in\logic{L}$ for any $i\in\omega$;
\item
for each $s\in\mathsf{Sub}$ and  
each formula $\phi$,
$\phi\supset s(\alpha)\in\logic{L}$ whenever
$\phi\supset s(\beta_{i})\in\logic{L}$ for every $i\in\omega$. 
\end{enumerate}
Throughout this paper, we restrict our attention to pairs of the form 
\eqref{omega-rule}
that satisfy the following 
condition: 
\begin{equation}\label{countable}
\left|\{
\{s(\beta_{i})\mid i\in\omega\}\mid
s\in \mathsf{Sub}
\}\right| 
\leq
\aleph_{0}. 
\end{equation}
For simplicity, we often write $\alpha$ or $\beta_{i}$ ($i\in\omega$) 
to represent any substitution instances, if there is no confusion.

\begin{example}(Tanaka \cite{tnk18}). 
With $\prmt$, $\prtp$, $\prcf$, and 
$\Box p\supset\Box\Box p$, 
the $\omega$-rule 
$$
\dfrac{p\supset\Diamond^{n}\top\ \text{($n\in\omega$)}}{p\supset\bot}
$$ 
axiomatizes the provability logic $\logic{GL}$. 
\end{example}

\begin{example}(Kaneko-Nagashima-Suzuki-Tanaka \cite{knk-ngs-szk-tnk}). 
Suppose that $\eko\phi$ and $\cko\phi$ denote that 
every agent knows $\phi$ 
and $\phi$
is common knowledge among the agents, respectively.  
Consider the following
$\omega$-rules: 
\begin{equation}\label{cklrules}
\dfrac{
\gamma
\supset
\Box_{1}(
\phi_{1}\supset
\Box_{2}(
\phi_{2}\supset
\cdots\supset
\Box_{k}(
\phi_{k}\supset
\eko^{n}\phi
)
\cdots
)
)
\text{\rm \hspace{10pt}($n\in\omega$) }
}
{
\gamma
\supset
\Box_{1}(
\phi_{1}\supset
\Box_{2}(
\phi_{2}\supset
\cdots\supset
\Box_{k}(
\phi_{k}\supset
\cko\phi
)
\cdots
)
)
},
\end{equation}
where $k\in\omega$ and each $\Box_{i}$ ($i=1,\ldots k$) is 
$\eko$ or $\cko$. 
Then, 
with $\prmt$, $\prtp$, $\prcf$, and axiom schemata 
$\cko p\supset\eko^{n}p$ ($n\in\omega$),
the $\omega$-rules \eqref{cklrules}
axiomatize the common knowledge logic. 
\end{example}

\section{Model existence theorem}\label{sec:mex}

In this section, we prove that for any predicate modal logic $\logic{L}$ 
that admits countably many pairs of a set of axiom schemata and an $\omega$-rule 
of the form \eqref{omega-rule}, there exists a neighborhood 
model $\nmodel{M}=\langle C,\nsystem,\ndom,\nint\rangle$ 
such that 
\begin{equation}\label{modelinfmeet}
c\vl_{\nmodel{M}}\alpha
\eq
c\vl_{\nmodel{M}}\beta_{i}\ \text{($i\in\omega$)} 
\end{equation}
for any $c\in C$.

Let $\logic{L}$ be a predicate modal logic. 
Define 
an equivalence relation $\lcong{L}$ on the set $\Phi$ of all predicate modal formulas by 
$\phi\lcong{L}\psi$ if and only if $\phi\equiv\psi\in\logic{L}$.  
For any formula $\phi\in\Phi$, 
we write $[\phi]$ for the equivalence class of $\phi$ in $\Phi/{\lcong{L}}$. 
It is easy to see that 
$\lind{L}=\langle \Phi/{\lcong{L}};\lor,\land,-,\Box,0,1\rangle$ 
is a modal algebra, 
in which operators $0$, $1$, $-$, $\lor$, $\land$, and $\Box$ on $\Phi/{\lcong{L}}$ 
are defined by the corresponding logical symbols of representatives. 
By the rule \eqref{congrule} in Definition \ref{pmlogic}, 
$\nec[\phi]:=[\nec\phi]$ defines a well-defined 
unary operator $\nec$ on  $\lind{L}$. 
It is easy to see that $\phi\supset\psi\in\logic{L}$ if and 
only if $[\phi]\leq[\psi]$ in $\lind{L}$, for every
formula $\phi$ and $\psi$ in $\logic{L}$.

\begin{lemma}\label{predlind}
Let $\logic{L}$ be a predicate modal logic that admits countably many 
pairs of axiom schemata and an $\omega$-rule of the form \eqref{omega-rule}. 
Then, $\lind{L}$ 
is a modal algebra that satisfies the following equations: 
\begin{equation}\label{larule}
[s(\alpha)]=\iand_{i\in\omega}[s(\beta_{i})],  
\end{equation}
where $s$ is any 
uniform substitution of formulas for
atomic formulas, 
and 
\begin{equation}\label{foralliand}
[\forall x\phi]=\iand_{y\in\variable}\big[ [y/x]\phi \big]
\end{equation}
for any formula $\phi\in\Phi$ and any $x\in\variable$. 
If $\logic{L}$
satisfies properties among $\prmt$, $\prtp$, and $\prcf$, 
then
$\lind{L}$
satisfies the properties corresponding to those of $\logic{L}$. 
\end{lemma}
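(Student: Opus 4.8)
The plan is to verify each of the claimed equations and closure properties in turn, using only the defining closure conditions of a predicate modal logic together with the hypothesis that $\logic{L}$ admits the relevant $\omega$-rules. First I would record the routine fact, already noted just before the statement, that $\lind{L}$ is a modal algebra: the Boolean operations are well defined on $\Phi/{\lcong{L}}$ because $\logic{L}$ contains all classical predicate tautologies and is closed under modus ponens and uniform substitution, and $\nec$ is well defined by condition \eqref{congrule}. The partial order on $\lind{L}$ is $[\phi]\leq[\psi]$ iff $\phi\supset\psi\in\logic{L}$, so all the subsequent claims reduce to manipulating provable implications.

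For equation \eqref{larule}, fix a substitution $s$. The first clause in the definition of ``admits'' gives $s(\alpha)\supset s(\beta_i)\in\logic{L}$ for every $i$, i.e.\ $[s(\alpha)]\leq[s(\beta_i)]$, so $[s(\alpha)]$ is a lower bound of $\{[s(\beta_i)]\mid i\in\omega\}$. To see it is the greatest lower bound, let $[\phi]$ be any lower bound, so $\phi\supset s(\beta_i)\in\logic{L}$ for all $i$; the second clause (the $\omega$-rule itself, with the formula variable instantiated to $\phi$) yields $\phi\supset s(\alpha)\in\logic{L}$, i.e.\ $[\phi]\leq[s(\alpha)]$. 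Hence $[s(\alpha)]=\iand_{i\in\omega}[s(\beta_i)]$, and in particular this infimum exists in $\lind{L}$. Equation \eqref{foralliand} is handled the same way using the closure properties of a predicate modal logic directly rather than an $\omega$-rule: from the tautology $\forall x\phi\supset[y/x]\phi$ (a classical predicate validity, after suitable renaming of bound variables) we get $[\forall x\phi]\leq\big[[y/x]\phi\big]$ for every $y\in\variable$; and if $\psi\supset[y/x]\phi\in\logic{L}$ for all $y$, then in particular taking $y$ a variable not free in $\psi$ or $\phi$ and applying generalization (condition (1) in Definition~\ref{pmlogic}) together with the standard quantifier-shift tautology $\forall y(\psi\supset[y/x]\phi)\supset(\psi\supset\forall x\phi)$ gives $\psi\supset\forall x\phi\in\logic{L}$, so $[\forall x\phi]$ is the greatest lower bound.

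Finally, for the preservation of $\prmt$, $\prtp$, $\prcf$: each of these properties of $\logic{L}$ is by definition the membership in $\logic{L}$ of a specific schema ($\nec(p\land q)\supset\nec p\land\nec q$, $\nec\top$, or $\nec p\land\nec q\supset\nec(p\land q)$), and since $\logic{L}$ is closed under uniform substitution, the corresponding inequality (or equality) holds for all elements $[\phi],[\psi]$ of $\lind{L}$, which is exactly the algebraic condition $\prmt$, $\prtp$, or $\prcf$ on the modal algebra. I expect the only genuinely delicate point to be the ``greatest lower bound'' direction of \eqref{foralliand}: one must be careful about variable capture when instantiating the formula parameter of the quantifier-shift principle, choosing the auxiliary variable $y$ fresh for $\psi$ and $\phi$ and invoking the substitution closure of $\logic{L}$ to move between the fresh instance and the general one. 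Everything else is a direct unwinding of the definitions.
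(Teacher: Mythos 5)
Your proof is correct and, for \eqref{larule}, it is essentially the paper's own argument: the axiom instances $s(\alpha)\supset s(\beta_{i})$ give the lower-bound direction and the $\omega$-rule, applied to an arbitrary representative $\phi$ of a lower bound, gives the greatest-lower-bound direction. The paper proves only \eqref{larule} explicitly and refers to \cite{tnk22} for \eqref{foralliand} and the preservation of $\prmt$, $\prtp$, $\prcf$; your direct arguments for those parts (fresh-variable instantiation with generalization and the quantifier-shift tautology, and substitution instances of the defining schemata) are the standard ones and are fine, including your correct caution about capture-avoiding choice of $y$.
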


\begin{proof}
We only show \eqref{larule}. 
Take any $i\in\omega$. 
Since $\alpha\supset\beta_{i}\in\logic{L}$, 
$$
[\alpha]\leq
[\beta_{i}]
$$
holds. 
Take any formula $\phi$ and 
suppose that $[\phi]\leq[\beta_{i}]$ for any $i\in\omega$. 
Then, $\phi\supset\beta_{i}\in\logic{L}$ for any $i\in\omega$. 
By the $\omega$-rule \eqref{omega-rule}, 
$\phi\supset\alpha\in\logic{L}$. Hence, $[\phi]\leq[\alpha]$. 
For the remainder of the proof, see \cite{tnk22}.  
\end{proof}

\begin{theorem}\label{mdlex}
Let $\logic{L}$ be a consistent predicate modal logic that 
admits countably many pairs of a set of axiom schemata and an $\omega$-rule 
of the form 
\eqref{omega-rule}.  
Then, there exists a neighborhood model 
$\nmodel{M}
=
\langle
C,\nsystem,\ndom,\nint
\rangle$ and an assignment $\nass $
such that 
\begin{equation}\label{nmodelomega}
\nval{\nint,\nass}(s\left(\alpha)\right)
=
\bigcap_{i\in\omega}
\nval{\nint,\nass}\left(s(\beta_{i})\right)
\end{equation}
for any uniform substitution  $s$,  
and 
\begin{equation}\label{canonicalmodel}
\phi\in\logic{L}\eq\nmodel{M}\vl\phi. 
\end{equation}
for any closed formula $\phi\in\Phi$. 
Moreover, if $\logic{L}$ satisfies properties 
among $\prmt$, $\prtp$, and $\prcf$
then 
the model satisfies 
the properties corresponding to those of $\logic{L}$.    
\end{theorem}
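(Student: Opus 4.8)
The plan is to construct the desired neighborhood model by applying the algebraic machinery already set up in the excerpt. First I would form the Lindenbaum–Tarski algebra $\lind{L}=\langle\Phi/{\lcong{L}};\lor,\land,-,\Box,0,1\rangle$, which by Lemma~\ref{predlind} is a modal algebra satisfying \eqref{larule} and \eqref{foralliand} and inheriting whichever of $\prmt$, $\prtp$, $\prcf$ hold of $\logic{L}$. The key point is that $\lind{L}$ is \emph{not} complete in general, so I cannot directly take it as an algebraic model; instead I pass to a Q-filter neighborhood frame. Let $S\subseteq\power(\lind{L})$ be the countable set collecting, for each relevant $\omega$-rule, the sets $\{[s(\beta_{i})]\mid i\in\omega\}$ over all substitutions $s$ (countably many such sets by \eqref{countable}), together with all sets $\{[\,[y/x]\phi\,]\mid y\in\variable\}$ arising from universal quantifications $\forall x\phi$ over the countably many formulas $\phi\in\Phi$. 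This $S$ is countable, and by Lemma~\ref{predlind} every member of $S$ has a meet in $\lind{L}$, namely $[s(\alpha)]$ or $[\forall x\phi]$ respectively.

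Next I would take the neighborhood frame $Z=\dfrm_{S}(\lind{L})=\langle\qflt{S}(\lind{L}),\nsystem\rangle$ (in the $\prmt$ case, a frame $\dfrm_{S,\nsystem}(\lind{L})$ with some neighborhood system satisfying \eqref{dfrmcond}). By Theorem~\ref{extjt} the map $f(x)=\{F\in\qflt{S}(\lind{L})\mid x\in F\}$ is a modal-algebra monomorphism from $\lind{L}$ into $\dalg(Z)$ that preserves all meets of sets in $S$; and by Theorem~\ref{matonfr} / Lemma~\ref{nfrtoma}, $Z$ and $\dalg(Z)$ satisfy whichever of $\prmt$, $\prtp$, $\prcf$ hold of $\lind{L}$, hence of $\logic{L}$. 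Now take $C=\qflt{S}(\lind{L})$ with neighborhood system $\nsystem$, domain $\ndom=\variable$, and interpretation $\nint$ defined by $(\bar y)\in P^{\nint}(F)\eq F\in f([P(\bar y)])$, i.e.\ $\nint$ reads off the image of atomic formulas under $f$. An induction on $\phi$, using the assignment $\nass=\mathrm{id}_{\variable}$ and the fact that $f$ commutes with $\lor,\land,-,\Box$ and with the distinguished meets in $S$, gives $\nval{\nint,\nass}(\phi)=f([\phi])$ for every $\phi$; in particular the clause for $\forall x\phi$ works precisely because $\{[\,[y/x]\phi\,]\mid y\in\variable\}\in S$ and $f$ preserves its meet, and similarly the clause validating \eqref{nmodelomega} works because $\{[s(\beta_i)]\mid i\in\omega\}\in S$ with meet $[s(\alpha)]$.

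With the identity $\nval{\nint,\nass}(\phi)=f([\phi])$ in hand, the two required properties follow. For \eqref{nmodelomega}: $\nval{\nint,\nass}(s(\alpha))=f([s(\alpha)])=f\!\big(\iand_{i}[s(\beta_i)]\big)=\iand_i f([s(\beta_i)])=\bigcap_i\nval{\nint,\nass}(s(\beta_i))$. For \eqref{canonicalmodel}: since $f$ is injective and $\logic{L}$ is consistent, $[\phi]=1$ iff $\phi\in\logic{L}$, and $f([\phi])=C$ iff $[\phi]=1$; for a closed $\phi$, $\nval{\nint,\nass}(\phi)$ is independent of the assignment, so $\nmodel{M}\vl\phi$ iff $\nval{\nint,\nass}(\phi)=C$ iff $\phi\in\logic{L}$. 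Finally the preservation of $\prmt$, $\prtp$, $\prcf$ from $\logic{L}$ to $\nmodel{M}$ is exactly Theorem~\ref{matonfr}.

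The main obstacle — and the place requiring genuine care — is the choice of $S$ and the verification that $S$ is countable while simultaneously being rich enough for the induction: it must contain \emph{every} set of instances of an $\omega$-rule (here \eqref{countable} is essential, otherwise $S$ need not be countable and the Rasiowa–Sikorski/Q-filter representation of Theorem~\ref{extjt} fails) and \emph{every} $\variable$-indexed family coming from a universal quantifier. Since $\Phi$ is countable and there are countably many $\omega$-rules each contributing countably many instance-sets, $S$ is countable; but one must also check that in each case the relevant meet genuinely exists in $\lind{L}$, which is the content of \eqref{larule} and \eqref{foralliand} in Lemma~\ref{predlind}. A secondary subtlety is the $\prmt$ case, where $\dfrm_{S}(\lind{L})$ must be replaced by a frame with an enlarged (monotone) neighborhood system satisfying \eqref{dfrmcond}, and one invokes the last sentence of Theorem~\ref{extjt} to see that $f$ still has the required preservation properties.
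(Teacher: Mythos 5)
Your proposal is correct and follows essentially the same route as the paper: Lindenbaum algebra, the countable set $S=S_{1}\cup S_{2}$ of $\omega$-rule instance sets and quantifier sets, the Q-filter neighborhood frame $\dfrm_{S}(\lind{L})$, the embedding $f$ of Theorem~\ref{extjt} preserving the designated meets, and the identity $\nval{\nint,\nass}(\phi)=f([\phi])$ yielding \eqref{nmodelomega} and \eqref{canonicalmodel}, with Theorem~\ref{matonfr} handling $\prmt$, $\prtp$, $\prcf$. The only cosmetic difference is that the paper routes the computation through the algebraic model on $\dalg(Z)$ and Lemma~\ref{amodeltonmodel}, whereas you carry out the same induction directly on the neighborhood model.
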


\begin{proof}
Let $\lind{L}$ be the Lindenbaum algebra of $\logic{L}$. 
Define $S_{1}$ and $S_{2}$ in $\power(\lind{L})$ by 
$$
S_{1}=
\left\{
\left\{
\left[s(\beta_{i})\right] 
\mid 
i\in\omega\right\}
\mid 
s\in\mathsf{Sub}
\right\}, \ 
S_{2}=
\left\{
\left\{
\left[\phi(y/x)\right] 
\mid 
y\in\variable\right\}\mid \forall x\phi\in\Phi\right\}, 
$$
respectively, and let $S=S_{1}\cup S_{2}$. 
Let $Z=\langle C,\nsystem\rangle$ be a 
Q-filter neighborhood frame of $\lind{L}$ for $S$, where  
$\nsystem$ is the neighborhood system given in Lemma \ref{canonicalsystem}
(if $A$ is monotonic, $\nsystem$ is defined by \eqref{monotonesystem}). 
Define 
an algebraic model 
$\nmodel{A}=
\langle
A,\ndom,\ninta
\rangle$
by 
$A=\dalg(Z)$, $\ndom=\variable$,  and
$$
P^{\ninta}(x_{1},\ldots,x_{n})=f([P(x_{1},\ldots,x_{n})])
$$ 
for any atomic formula $P(x_{1},\ldots,x_{n})$, 
where $f$ is an embedding given in Theorem \ref{extjt}.
It can be verified by induction on the construction of the formulas that 
\begin{equation}\label{lalgembedding}
u_{\ninta,\nass}(\phi)=f([\phi])
\end{equation}
for any formula $\phi$.  
The induction step for universal quantifiers follows from 
Theorem \ref{extjt} and \eqref{foralliand}. 
Take the neighborhood model 
$\nmodel{M}=\langle C,\nsystem,\ndom,\nint\rangle$
and the assignment $\nass$ given in Lemma \ref{amodeltonmodel}. 
First, we show \eqref{nmodelomega}: 
\begin{align*}
\nval{\nint,\nass}(\alpha)
&=
u_{\ninta,\nass}(\alpha)
&& \text{(Lemma \ref{amodeltonmodel})}\\
&=
f([\alpha])
&& \text{\eqref{lalgembedding}}\\
&=
f\left(\iand_{i\in\omega}[\beta_{i}]\right)
&& \text{\eqref{larule}}\\
&=
\bigcap_{i\in\omega}f\left([\beta_{i}]\right)
&& \text{\eqref{finfmeet}}\\
&=
\bigcap_{i\in\omega}u_{\ninta,\nass}\left(\beta_{i}\right)
&& \text{\eqref{lalgembedding}}\\
&=
\bigcap_{i\in\omega}\nval{\nint,\nass}\left(\beta_{i}\right). 
&& \text{(Lemma \ref{amodeltonmodel})}
\end{align*}
Next, we show \eqref{canonicalmodel}. Since $\phi$ is closed, 
\begin{align*}
\phi\in\logic{L}
&\eq
\text{$[\phi]=1$ in $\lind{L}$}\\
&\eq
\text{$f([\phi])=1$ in $\dalg(Z)$}
&& \text{($f$ is monomorphism)}\\
&\eq
\text{$u_{\ninta,\nass}(\phi)=1$ in $\dalg(Z)$}
&& \text{\eqref{lalgembedding}}\\
&\eq
\nval{\nint,\nass}(\phi)=C
&& \text{(Lemma \ref{amodeltonmodel})}\\
&\eq
\nmodel{M}\vl\phi. 
\end{align*}
For the remainder of the proof, see \cite{tnk22}.  
\end{proof}

\begin{corollary}\label{completeness}
If there exists a neighborhood system $\nsystem$ such that 
the Q-filter neighborhood frame 
$\dfrm_{S,\nsystem}(\lind{L})\in\deffrm(\logic{L})$,
then 
$\logic{L}$ is sound and complete with respect to $\deffrm(\logic{L})$. 
\end{corollary}

\begin{proof}
Soundness follows directly from the definition of $\deffrm(\logic{L})$. 
We show completeness. Suppose that $\phi\not\in\logic{L}$ and $\phi$ is closed. 
Then, $\dfrm_{S,\nsystem}(\lind{L})\not\vl\phi$, as is shown in the proof of Theorem \ref{mdlex}. 
Then, $\deffrm(\logic{L})\not\vl\phi$, 
since $\dfrm_{S,\nsystem}(\lind{L})\in\deffrm(\logic{L})$.  
\end{proof}

\section{Completeness of a predicate 
extension of $\logic{GL}$}\label{sec:glcompleteness}

In this section, we present a predicate extension of the
logic $\logic{GL}$ of provability
which is sound and complete with respect 
to a class of neighborhood frames with constant domains. 
Define the predicate modal logic $\qglo$ 
to be the set of all formulas that are derivable 
in the following proof system $\psqglo$:

\begin{definition}
$\psqglo$ 
consists of the following axiom schemata and inference rules: 
\begin{enumerate}
\item
all tautologies of classical predicate logic;
\item
$\Box(p\supset q)\supset(\Box p\supset\Box q)$;
\item
$\Box p\supset\Box\Box p$;
\item
modus ponens;
\item
uniform substitution rule;
\item
necessitation rule for the modal operator;
\item
generalization rule; 
\item
$
\dfrac{p\supset\Diamond^{n}\top\ \text{($n\in\omega$)}}{p\supset\bot} 
$ ($\Diamond^{\ast}$).
\end{enumerate}
\end{definition}

The proof system $\psqglo$ is equivalent to a Gentzen-style proof system 
$\nqgl$ given in \cite{tnk18}. 
More precisely, if $\phi$ is provable in $\psqglo$ then 
the sequent $\yy\phi$ is provable in $\nqgl$, and if a sequent 
$\gm\yy\dl$ is provable 
in $\nqgl$ then $\iand\gm\supset\ior\dl$ is provable in $\psqglo$. 
It is shown in \cite{tnk18} that 
the propositional fragment of $\nqgl$ coincides with 
$\logic{GL}$, and that $\nqgl$ is sound and complete with respect to the 
class of Kripke frames of locally finite height with expanding domains.  
Consequently, $\psqglo$ inherits these properties. 
Therefore, $\qglo$ is a proper superset of the minimal predicate 
extension $\logic{QGL}$ of $\logic{GL}$, since
$\logic{QGL}$ is not complete with 
respect to any class of Kripke frames
(Montagna \cite{mnt84}). 
In the rest of the section, we show that $\qglo$ is sound and complete 
with respect to the following 
class $\cglo$ of neighborhood frames with constant domains.

\begin{definition}
A neighborhood frame 
$Z=\langle C,\nsystem\rangle$ is called a {\em GL$_{\Diamond_{\ast}}$-frame} if 
$\dalg(Z)$ satisfies 
$\prmt$, $\prtp$, $\prcf$, as well as the following 
additional conditions: 
\begin{enumerate}
\item
$\Box_{Z} X\subseteq\Box_{Z}\Box_{Z} X$ for every $X\in\power(C)$;
\item
$\bigcap_{n\in\omega}{\Diamond_{Z}}^{n}C=\emptyset$.
\end{enumerate}
We write $\cglo$ for the class of all GL$_{\Diamond_{\ast}}$-frames. 
\end{definition}

\begin{theorem}
$\qglo$ is sound and complete with respect to $\cglo$. 
\end{theorem}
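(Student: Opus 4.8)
The plan is to apply Corollary~\ref{completeness}(1) to $\logic{L}=\logic{QGL}$, so the whole task reduces to verifying that the canonical Q-filter neighborhood frame $Z=\dfrm_{S}(\lind{QGL})$ is itself a GL-frame, i.e. that $Z\in\cgl$. First I would confirm that $\logic{QGL}$, in the sense of Definition~\ref{pmlogic}, is a consistent predicate modal logic admitting countably many pairs of axiom schemata and an $\omega$-rule of the form \eqref{omega-rule}: consistency follows from the known soundness of $\nqgl$ with respect to $\mathcal{CW}$, the axioms $\Box(p\supset q)\supset(\Box p\supset\Box q)$ and $\Box p\supset\Box\Box p$ together with necessitation give $\prmt$, $\prtp$, $\prcf$, and the displayed $\omega$-rule is exactly of the form \eqref{omega-rule} with $\alpha=\bot$ and $\beta_{n}=\Diamond^{n}\top$; condition \eqref{countable} holds because a uniform substitution changes none of the $\beta_{n}$ (they contain no predicate symbols), so the set in \eqref{countable} is a singleton. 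Hence Theorem~\ref{mdlex} applies and $Z$ is well defined.

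Next I would check the $\prmt$, $\prtp$, $\prcf$ requirements on $\dalg(Z)$. Since $\logic{QGL}$ satisfies $\prmt$, $\prtp$, $\prcf$, Lemma~\ref{predlind} gives the corresponding properties for $\lind{QGL}$, Theorem~\ref{matonfr} transfers them to $Z=\dfrm_{S}(\lind{QGL})$, and Lemma~\ref{nfrtoma} transfers them to $\dalg(Z)$. It then remains to verify the two extra GL-conditions. For condition~(1), $\Box_{Z}X\subseteq\Box_{Z}\Box_{Z}X$ for all $X\in\power(C)$: this is the algebraic statement that $\dalg(Z)$ validates $\Box p\supset\Box\Box p$, which by Theorem~\ref{modelduality} is equivalent to $Z\vl\Box p\supset\Box\Box p$, and this holds because $\Box p\supset\Box\Box p\in\logic{QGL}$ and, via the monomorphism $f$ of Theorem~\ref{extjt} together with the identity $u_{\ninta,\nass}(\psi)=f([\psi])$ established in the proof of Theorem~\ref{mdlex}, every theorem of $\logic{QGL}$ takes value $1=C$ in $\dalg(Z)$; alternatively one argues directly at the level of Q-filters, using that $\Box x\in F$ implies $\Box\Box x\in F$ since $[\,\Box p\supset\Box\Box p\,]=1$. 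For condition~(2), $\bigcap_{n\in\omega}{\Diamond_{Z}}^{n}C=\emptyset$: unwinding the definition of $\dalg(Z)$, ${\Diamond_{Z}}^{n}C=u_{\ninta,\nass}(\Diamond^{n}\top)=f([\Diamond^{n}\top])$, so the intersection over $n$ equals $\bigcap_{n}f([\Diamond^{n}\top])$; by \eqref{nmodelomega}/\eqref{larule} with $\alpha=\bot$ this is $f([\bot])=\emptyset$. Thus $Z$ satisfies all the GL-frame conditions, so $Z\in\cgl$, and since $\cgl=\defframe(\cgl)\subseteq\defframe$ of nothing stronger, in particular $\dfrm_{S}(\lind{QGL})=Z\in\cgl$; to invoke Corollary~\ref{completeness}(1) I need $Z\in\defframe(\logic{QGL})$, which follows from soundness of $\logic{QGL}$ over $\cgl$ once that is established — so I would instead argue as follows.

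The cleaner route is to prove the two halves separately. For soundness, I would show $\logic{QGL}\subseteq\defformula(\cgl)$, i.e. $\cgl\vl\phi$ for each axiom of $\psqgl$ and closure of $\defformula(\cgl)$ under the rules; the only nonroutine points are the $\omega$-rule, which is validated precisely by condition~(2) of GL-frames (if $c\vl\Diamond^{n}\top$ for all $n$ under every assignment but the frame satisfies $\bigcap_{n}{\Diamond_{Z}}^{n}C=\emptyset$, the antecedent side forces the conclusion $p\supset\bot$), and the axiom $\Box p\supset\Box\Box p$, validated by condition~(1); $\prmt$, $\prtp$, $\prcf$ handle the normal axiom and necessitation, and the predicate-logic axioms and generalization are immediate from the constant-domain clauses for $\forall$. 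For completeness, $\defformula(\cgl)\subseteq\logic{QGL}$: given closed $\phi\notin\logic{QGL}$, Theorem~\ref{mdlex} yields the canonical model on $Z=\dfrm_{S}(\lind{QGL})$ with $\nmodel{M}\not\vl\phi$; by the previous paragraph $Z\in\cgl$; hence $\cgl\not\vl\phi$, i.e. $\phi\notin\defformula(\cgl)$. The case of open $\phi$ reduces to its universal closure, which is in $\logic{QGL}$ iff $\phi$ is and is valid on $\cgl$ iff $\phi$ is, using the constant-domain $\forall$-clause. \textbf{The main obstacle} I anticipate is bookkeeping rather than conceptual: carefully checking that the canonical $Z$ genuinely satisfies condition~(1) of GL-frames, since $\Box p\supset\Box\Box p$ is not one of $\prmt$, $\prtp$, $\prcf$ and so is not covered by Theorem~\ref{matonfr}; the argument there must pass explicitly through the fact that $f$ is a modal-algebra embedding sending $1$ to $C$, so that every $\logic{QGL}$-theorem evaluates to $C$ in $\dalg(Z)$, combined with Theorem~\ref{modelduality} to pull this back to a frame property. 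A secondary subtlety is the treatment of non-closed formulas in the completeness direction, which the $\omega$-rule framework only handles at the level of closed formulas, so the reduction to universal closures must be spelled out.
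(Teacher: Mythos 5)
Your overall strategy is the paper's: soundness by induction on derivations, and completeness via Corollary~\ref{completeness}/Theorem~\ref{mdlex} by showing that the canonical Q-filter frame $Z=\dfrm_{S}(\lind{QGL})$ lies in $\cgl$; your treatment of the second GL-condition (using that the embedding $f$ of Theorem~\ref{extjt} preserves the meet $\iand_{n\in\omega}[\Diamond^{n}\top]=[\bot]$ coming from the $\omega$-rule, so that $\bigcap_{n}{\Diamond_{Z}}^{n}C=f([\bot])=\emptyset$) is exactly the paper's computation.

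However, the justification you lead with for condition (1) — and return to in your ``main obstacle'' paragraph — does not work as stated. Condition (1) demands $\Box_{Z}X\subseteq\Box_{Z}\Box_{Z}X$ for \emph{every} $X\in\power(C)$, i.e.\ $\dalg(Z)\vl\Box p\supset\Box\Box p$ under all interpretations into $\power(C)$. The identity $u_{\ninta,\nass}(\psi)=f([\psi])$ only says that $\logic{QGL}$-theorems evaluate to $C$ under the \emph{canonical} interpretation, whose values lie in the image of $f$; an arbitrary $X\in\power(C)$ need not be of that form, so ``every theorem takes value $C$ in $\dalg(Z)$'' combined with Theorem~\ref{modelduality} does not deliver the frame condition — this is precisely the canonicity issue that makes the step nontrivial, not mere bookkeeping. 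The argument that does work is the one you mention only parenthetically as an alternative, and it is the paper's: if $F\in\Box_{Z}X$, then by the definition \eqref{dfrmdef} of the canonical neighborhood system there is a formula $\phi$ with $\Box[\phi]\in F$ and $f([\phi])\subseteq X$; monotonicity of $\Box_{Z}$ gives $f(\Box[\phi])=\Box_{Z}f([\phi])\subseteq\Box_{Z}X$, and since $\Box\phi\supset\Box\Box\phi\in\logic{QGL}$ we get $\Box\Box[\phi]\in F$, hence $F\in f(\Box\Box[\phi])=\Box_{Z}f(\Box[\phi])\subseteq\Box_{Z}\Box_{Z}X$. With that replacement (and your routine reduction of open formulas to their universal closures, which the paper leaves implicit) the proposal coincides with the paper's proof.
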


\begin{proof}
Soundness is proved by induction on the height of 
the derivations in $\psqglo$. 
Hence, $\cglo\subseteq\deffrm(\qglo)$. 
We prove completeness.  
By Corollary \ref{completeness}, it suffices to show that 
$\dfrm_{S,\nsystem}\left(A(\qglo)\right)\in\cglo$, 
where $A(\qglo)$ is the Lindenbaum algebra of $\qglo$ 
and $\nsystem$ is 
the neighborhood system defined by \eqref{monotonesystem}. 
First, we show  that $\bigcap_{n\in\omega}{\Diamond_{Z}}^{n}C=\emptyset$.
Since 
$
f\colon  A(\qglo)\yy \dalg\left(\dfrm_{S,\nsystem}\left(A(\qglo)\right)\right)
$
given in Theorem \ref{extjt} preserves the infinite meet of the $\omega$-rule in 
$\psqglo$, 
$$
\bigcap_{n\in\omega}{\Diamond_{Z}}^{n}C
=
\bigcap_{n\in\omega}{\Diamond_{Z}}^{n}f([\top])
=
f\left(\iand_{n\in\omega}\Diamond^{n}[\top]\right)
=
f(\bot)
=
\emptyset. 
$$ 
Next, we show that $\Box_{Z} X\subseteq\Box_{Z}\Box_{Z} X$ for 
any $X\in \dalg\left(\dfrm_{S,\nsystem}\left(A(\qglo)\right)\right)$.
Take any $F\in \Box_{Z} X$. Then, there exists a formula $\Box\phi$ such that 
$\Box[\phi]\in F$ and $f([\phi])\subseteq X$. 
Hence, by $\prmt$, 
\begin{equation}\label{boxbox}
f\left(\Box[\phi]\right)
=
\Box_{Z} f\left([\phi]\right)
\subseteq
\Box_{Z} X. 
\end{equation}
Since $\Box\phi\supset\Box\Box\phi$ is in $\qglo$, 
$\Box\Box[\phi]\in F$. 
Therefore, $F\in\Box_{Z}\Box_{Z} X$ by \eqref{boxbox}. 
\end{proof}

\section{Kripke incompleteness of a common knowledge logic}\label{sec:cklincompleteness}

In this section, we prove that there exists 
a common knowledge logic that 
is neighborhood complete but Kripke incomplete. 
We define that a bimodal logic $\logic{L}$ 
with 
two modal operators $\eko$ and $\cko$ 
is a 
{\em common knowledge logic} if it satisfies the following properties: 
\begin{enumerate}
\item
for any formula $\phi$ and any $n\in\omega$, 
$\cko\phi\supset\eko^{n}\phi\in\logic{L}$; 

\item
for any formulas $\phi$ and $\psi$, 
if
$\psi\supset\eko^{n}\phi\in\logic{L}$ for any $n\in\omega$, 
then 
$\psi\supset\cko\phi\in\logic{L}$. 
\end{enumerate}

We define a common knowledge logic 
$\logic{QCKL}$ to be the set of all formulas that are derivable 
in the following proof system $\psqckl$:  

\begin{definition}\label{psqckl}
The proof system $\psqckl$ 
consists of the following axiom schemata and inference rules: 
\begin{enumerate}
\item
all tautologies of classical predicate logic;
\item
$\Box(p\supset q)\supset(\Box p\supset\Box q)$, where $\Box=\eko$ or $\Box=\cko$;
\item
for any $n\in\omega$, 
$\cko p\supset\eko^{n} p$; 
\item
$\barcan$, where $\Box=\eko$ or $\Box=\cko$;
\item
modus ponens;
\item
uniform substitution rule;
\item
necessitation rule for the modal operators $\eko$ and $\cko$;
\item
generalization rule; 
\item \label{cklright}
$
\dfrac{
\gamma
\supset
\Box_{1}(
\phi_{1}\supset
\Box_{2}(
\phi_{2}\supset
\cdots\supset
\Box_{k}(
\phi_{k}\supset
\eko^{n}\phi
)
\cdots
)
)
\text{\rm \hspace{10pt}($n\in\omega$) }
}
{
\gamma
\supset
\Box_{1}(
\phi_{1}\supset
\Box_{2}(
\phi_{2}\supset
\cdots\supset
\Box_{k}(
\phi_{k}\supset
\cko\phi
)
\cdots
)
)
},
$
for each $k\in\omega$ and $\{\Box_{i}\mid 1\leq i\leq k\}\subseteq\{\eko,\cko\}$. 
\end{enumerate}
The set of premises of each of the inference rules (\ref{cklright}) is countable. 
When $k=0$, 
\eqref{cklright} means that 
\begin{equation}\label{cright1}
\dfrac
{\gamma\supset\eko^{n}\phi\text{\rm \hspace{10pt}($n\in\omega$) }}
{\gamma\supset\cko\phi} .
\end{equation}
\end{definition}

The $\omega$-rules \eqref{cklright} of Definition \ref{psqckl} are introduced
by Kaneko-Nagashima-Suzuki-Tanaka \cite{knk-ngs-szk-tnk}.
The system $\psqckl$ is equivalent to $\psck$, a Gentzen-style proof
system given by Tanaka \cite{tnk01}. 
It is proved  that $\psck$ is sound and 
complete with respect to the class of Kripke frames with constant domains 
such that $R_{\cko}=\bigcup_{n\in\omega}{R_{\eko}}^{n}$ (\cite{tnk01}). 
Therefore, 
$\logic{QCKL}$ is Kripke complete. 
We claim that removing the Barcan formula and 
replacing \eqref{cklright} of Definition \ref{psqckl} 
with \eqref{cright1} causes 
the resulting proof system $\psqcklm$ to be 
neighborhood complete but Kripke incomplete.

\begin{definition}
The proof system $\psqcklm$ 
consists of (1)-(3) and (5)-(8) of Definition \ref{psqckl}
and \eqref{cright1}. 
Define the logic $\qcklm$ to be the set of all formulas that are derivable 
in $\psqcklm$. 
\end{definition}

\begin{definition}
A complete modal algebra $A$ with two modal operators $\eko$ and $\cko$ is 
called a {\em CKL$^{-}$-algebra}, if it satisfies $\prmt$, $\prtp$, $\prcf$, and 
$$
\cko x=\iand_{n\in\omega}\eko^{n} x
$$ 
for any $x\in A$. 
We write $\acklm$ for the class of all CKL$^{-}$-algebras. 
A neighborhood frame $Z=\langle C,\nsystem_{\eko},\nsystem_{\cko}\rangle$
is called a {\em CKL$^{-}$-frame} if
$\dalg(Z)\in\acklm$. 
We write $\ccklm$ for the class of all CKL$^{-}$-frames. 
\end{definition}

\begin{theorem}\label{qcklmncomp}
$\qcklm$ is sound and complete with respect to $\ccklm$. 
\end{theorem}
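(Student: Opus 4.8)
The plan is to apply Corollary~\ref{completeness}(1) to the logic $\qcklm$. By that corollary, it suffices to exhibit the relevant canonical $Q$-filter neighborhood frame and show it belongs to $\ccklm$; soundness is immediate from the definition of $\defframe(\qcklm)$ once we observe that every axiom and rule of $\psqcklm$ is validated on every frame in $\ccklm$ (for the $\omega$-rule \eqref{cright1} this uses that on a $\mathrm{CKL}^{-}$-frame $\cko$ is interpreted as $\bigcap_{n}\eko^{n}$, which preserves the relevant infinite meets). The core of the argument is the completeness half.

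First I would form the Lindenbaum algebra $A(\qcklm)$, which by Lemma~\ref{predlind} is a modal algebra (with two modal operators) satisfying $\prmt$, $\prtp$, $\prcf$, the equation $[\gamma\supset\cko\phi]$-instance coming from \eqref{cright1} — concretely $\cko[\phi]=\iand_{n\in\omega}\eko^{n}[\phi]$ after rewriting, using that the $\omega$-rule in this case has the shape \eqref{omega-rule} with $\beta_n=\eko^n\phi$ and $\alpha=\cko\phi$ — and the equation \eqref{foralliand} for the quantifier. Set $S=S_1\cup S_2$ as in the proof of Theorem~\ref{mdlex}, and let $Z=\dfrm_S(A(\qcklm))$. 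By Theorem~\ref{mdlex} we have the canonical model property \eqref{canonicalmodel} and the fact that the embedding $f$ from Theorem~\ref{extjt} preserves the infinite meets indexed by $S$, in particular $f(\cko[\phi])=\bigcap_{n\in\omega}\eko_Z^{\,n}f([\phi])$. It remains to check $\dalg(Z)\in\acklm$: that $\dalg(Z)$ satisfies $\prmt$, $\prtp$, $\prcf$ for both operators follows from Theorem~\ref{extjt} (or Lemma~\ref{nfrtoma} together with Theorem~\ref{matonfr}) since $A(\qcklm)$ satisfies them; and the identity $\cko_Z X=\bigcap_{n\in\omega}\eko_Z^{\,n}X$ for arbitrary $X\in\power(\qflt{S}(A(\qcklm)))$ is the point requiring care.

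For that identity, the inclusion $\cko_Z X\subseteq\bigcap_n\eko_Z^{\,n}X$ follows from the axiom schemata $\cko p\supset\eko^n p$ (item (3) of Definition~\ref{psqckl}), which give $\cko[\psi]\leq\eko^n[\psi]$ in $A(\qcklm)$ and hence, via $\prmt$ and the description of $\eko_Z,\cko_Z$ on complex algebras, that any $F$ with a witnessing $\cko\psi$ also has witnessing $\eko^n\psi$'s. The converse inclusion is the delicate direction: given $F\in\bigcap_n\eko_Z^{\,n}X$, one must produce a single formula $\cko\psi$ with $\cko[\psi]\in F$ and $f([\psi])\subseteq X$. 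Here I would mimic the $\logic{GL}$ argument of Section~\ref{sec:glcompleteness}: using $\prmt$ and the $\mathrm{CKL}^{-}$ identity on the Lindenbaum algebra itself, reduce to the case $X=f([\phi])$ for a representing formula, apply the $\omega$-rule \eqref{cright1} inside the logic (with $\gamma$ chosen as a conjunction of the finitely-or-countably many witnesses), and conclude $\cko[\phi]\in F$ from the fact that $F$ is a $Q$-filter for $S$ hence closed under the meet $\iand_n\eko^n[\phi]=\cko[\phi]$.

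\textbf{The main obstacle} I anticipate is precisely this converse inclusion $\bigcap_n\eko_Z^{\,n}X\subseteq\cko_Z X$ for \emph{arbitrary} $X$ rather than just for $X$ of the form $f([\phi])$: on the complex algebra $\dalg(Z)$ not every element is in the image of the embedding $f$, so one cannot directly name a formula $\phi$ with $f([\phi])=X$. The resolution is to use monotonicity ($\prmt$) to pass to the largest formula-definable set below $X$ that still contains the witnesses, together with the $Q$-filter closure property built into the definition of $\qflt{S}$ — this is exactly the mechanism by which the $\omega$-rule is "internalized" in the canonical frame, and it is the same device that makes the $\logic{GL}$ proof work. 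Once this identity is established, membership $\dfrm_S(A(\qcklm))\in\ccklm$ follows and Corollary~\ref{completeness}(1) yields soundness and completeness of $\qcklm$ with respect to $\ccklm$.
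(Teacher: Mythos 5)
Your skeleton (Lindenbaum algebra, $S=S_{1}\cup S_{2}$, the embedding $f$ of Theorem \ref{extjt}, reduction via Corollary \ref{completeness}) matches the paper, and you have correctly located the critical point: showing $\cko_{Z}X=\bigcap_{n\in\omega}\eko_{Z}^{\,n}X$ for \emph{arbitrary} $X\subseteq\qflt{S}(A)$. But your proposed resolution of that point is not an argument, and I do not see how to make it one along the route you chose, namely Corollary \ref{completeness}(1) with the frame $\dfrm_{S}(A)$ whose $\cko$-neighborhoods are given by \eqref{dfrmdef}. For such a frame, $F\in\cko_{Z}X$ requires a \emph{single} formula $\psi$ with $\cko[\psi]\in F$ and $f([\psi])\subseteq X$ (up to the monotone closure), whereas $F\in\bigcap_{n}\eko_{Z}^{\,n}X$ only supplies, for each $n$, possibly different witnessing formulas, and these cannot be amalgamated: there is no infinitary conjunction in the language, the image of $f$ is not closed under the joins needed to form a ``largest formula-definable set below $X$'' (such a set need not exist), and a Q-filter for $S$ is only closed under the specific meets $\iand_{n}\eko^{n}[\phi]=\cko[\phi]$ for a single $\phi$, not under meets of families $\{\eko^{n}[\psi_{n}]\}_{n}$ with varying $\psi_{n}$. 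The analogy with the $\logic{GL}$ proof does not carry over, because there the frame conditions reduce either to a single witnessing formula ($\Box_{Z}X\subseteq\Box_{Z}\Box_{Z}X$) or to the single element $C=f([\top])$ ($\bigcap_{n}\Diamond_{Z}^{\,n}C=\emptyset$); the CKL$^{-}$ condition is genuinely of a different kind.

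The paper avoids the problem by \emph{not} using \eqref{dfrmdef} for $\cko$. It keeps the canonical construction for $\eko$ but defines the $\cko$-neighborhood system outright by
$\nsystem_{\cko}(F)=\{X\mid F\in\bigcap_{n\in\omega}\eko^{n}X\}$,
so that $\cko_{Z}X=\bigcap_{n\in\omega}\eko_{Z}^{\,n}X$ holds for all $X$ by construction and $Z\in\ccklm$ is immediate. The remaining work is then to verify that this $\nsystem_{\cko}$ satisfies condition \eqref{dfrmcond}, which is done using the axioms $\cko p\supset\eko^{n}p$ and the closure of Q-filters under the meets in $S_{1}$ (this is exactly where the $\omega$-rule \eqref{cright1} enters, via $\cko[\phi]=\iand_{n}\eko^{n}[\phi]$ in the Lindenbaum algebra), after which Corollary \ref{completeness}(2) — not (1) — applies, and completeness with respect to $\ccklm$ follows since the canonical frame lies in $\ccklm\subseteq\defframe(\qcklm)$. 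So the fix is to exploit the extra freedom in Definition \ref{qfltnfr} for monotonic logics rather than to try to prove the CKL$^{-}$ identity for the standard Q-filter frame; as it stands, your proof has a gap at precisely the step you flagged.
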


\begin{proof}
Soundness is proved by induction on the height of the derivations in the proof system 
$\psqcklm$. 
We show completeness. 
Let $A$ be the Lindenbaum algebra of $\qcklm$. 
Then $A$ is in $\acklm$ by Lemma \ref{predlind}. 
Define subsets $S_{1}$ and $S_{2}$ of $\power(A)$ by 
$$
S_{1}=
\left\{
\left\{
\left[\eko^{n}\phi\right] 
\mid 
n\in\omega\right\}
\mid 
\phi\in\Phi
\right\}, \ 
S_{2}=
\left\{
\left\{
\left[\phi(y/x)\right] 
\mid 
y\in\variable\right\}\mid \forall x\phi\in\Phi\right\}, 
$$
respectively, and let $S=S_{1}\cup S_{2}$. 
Define 
the neighborhood frame $Z=\langle C,\nsystem_{\eko},\nsystem_{\cko}\rangle$ 
in the same way as in the proof of Theorem \ref{mdlex}, except for $\nsystem_{\cko}$. 
We define the neighborhood system $\nsystem_{\cko}$ by 
$$
\nsystem_{\cko}(F)
=
\left\{
X\mid
F\in \bigcap_{n\in\omega}\eko^{n}X
\right\}
$$
for any $F\in\qflt{S}(A)$. 
Then, $Z$
is a CKL$^{-}$-frame, since 
\begin{align*}
F\in\cko X
\eq
X\in\nsystem_{\cko}(F)
\eq
F\in\bigcap_{n\in\omega}\eko^{n}X
\end{align*}
for any $F\in\qflt{S}(A)$ and $X\subseteq C$. 
We check that
$\nsystem_{\cko}$ satisfies \eqref{dfrmcond}. 
Take any $x\in A$. 
\begin{align*}
x\in\cko^{-1}[F]
&\eq
\cko x\in F\\
&\eq
\eko^{n} x\in F \ \text{($\forall n\in\omega$)}\\
&\eq
F\in  \eko^{n}f(x) \ \text{($\forall n\in\omega$)}\\
&\eq
F\in  \bigcap_{n\in\omega}\eko^{n}f(x)\\
&\eq
x\in  \bigcup_{X\in\nsystem_{\cko}(F)}\bigcap X. 
\end{align*}
The if-part of the last equivalence follows, since 
$X\in\nsystem_{\cko}$ and 
$x\in\bigcap X$ implies that $F\in\bigcap_{n\in\omega}\eko^{n}X$ and $X\subseteq f(x)$. 
The only if-part follows, since $f(x)\in\nsystem_{C}(F)$. 
Since $Z$ is a CKL$^{-}$-frame, $Z\in\deffrm(\qcklm)$ by soundness. 
By Corollary \ref{completeness}, 
$\qcklm$ is complete with respect to $\ccklm$. 
\end{proof}

\begin{corollary}\label{qcklmacomp}
$\qcklm$ is sound and complete with respect to $\acklm$. 
\end{corollary}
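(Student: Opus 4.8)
The plan is to derive both halves from the neighborhood completeness already obtained in Theorem~\ref{qcklmncomp}, using the equivalence of a neighborhood frame and its complex modal algebra (Theorem~\ref{modelduality}). Completeness comes essentially for free: the assignment $Z\mapsto\dalg(Z)$ carries every CKL$^{-}$-frame to a CKL$^{-}$-algebra, so $\{\dalg(Z)\mid Z\in\ccklm\}$ is a subclass of $\acklm$, whence $\defformula(\acklm)\subseteq\defformula(\{\dalg(Z)\mid Z\in\ccklm\})$; by Theorem~\ref{modelduality} the right-hand side equals $\defformula(\ccklm)$, and by Theorem~\ref{qcklmncomp} that equals $\qcklm$. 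Chaining the three steps yields $\defformula(\acklm)\subseteq\qcklm$, which is completeness. In concrete form: given a closed formula $\phi\notin\qcklm$, Theorem~\ref{qcklmncomp} supplies a CKL$^{-}$-frame $Z$ with $Z\not\vl\phi$, and then $\dalg(Z)\in\acklm$ refutes $\phi$ by Theorem~\ref{modelduality}; the reduction of arbitrary formulas to closed ones is handled as in Corollary~\ref{completeness}.

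For soundness I would argue directly, by induction on the height of a $\psqcklm$-derivation, fixing an arbitrary $A\in\acklm$ and showing $A\vl\phi$ for every $\qcklm$-theorem $\phi$; a direct argument is needed here because a CKL$^{-}$-algebra need not be the complex algebra of any neighborhood frame. The Boolean structure of $A$ accounts for the classical predicate tautologies, modus ponens, the uniform substitution rule, and the generalization rule (via $u_{\ninta,\nass}(\forall x\phi)=\iand_{d\in\ndom}u_{\ninta,[d/x]\nass}(\phi)$). Since $A$ satisfies $\prmt$, $\prtp$, and $\prcf$, both $\eko$ and $\cko$ are normal operators on $A$, which validates the $\mathsf{K}$-axioms of Definition~\ref{psqckl} and the necessitation rule. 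Finally the identity $\cko x=\bigcap_{n\in\omega}\eko^{n} x$ gives the axioms $\cko p\supset\eko^{n}p$ (because $\cko x\leq\eko^{n}x$) and the $\omega$-rule~\eqref{cright1}: if $u_{\ninta,\nass}(\gamma)\leq\eko^{n}u_{\ninta,\nass}(\phi)$ for every $n$, then $u_{\ninta,\nass}(\gamma)\leq\bigcap_{n\in\omega}\eko^{n}u_{\ninta,\nass}(\phi)=\cko u_{\ninta,\nass}(\phi)$. This establishes $\qcklm\subseteq\defformula(\acklm)$.

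The one point that really needs care --- and the reason the argument detours through Theorem~\ref{qcklmncomp} rather than invoking a Lindenbaum algebra directly --- is that the Lindenbaum algebra of $\qcklm$ is in general not a \emph{complete} modal algebra, so it is not itself a CKL$^{-}$-algebra and cannot serve as an algebraic model. What saves the completeness half is that the complex algebra of the Q-filter neighborhood frame built from that Lindenbaum algebra in the proof of Theorem~\ref{qcklmncomp} is a power-set algebra, hence complete, lies in $\acklm$, and receives an embedding of the Lindenbaum algebra via the map $f$ of Theorem~\ref{extjt}. Beyond this observation and the routine soundness induction, I do not expect any substantial obstacle.
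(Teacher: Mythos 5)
Your proposal is correct and follows essentially the same route as the paper: soundness by a direct induction on the height of $\psqcklm$-derivations (the key cases being handled by $\prmt$, $\prtp$, $\prcf$ and $\cko x=\bigcap_{n\in\omega}\eko^{n}x$), and completeness read off from Theorem~\ref{qcklmncomp} together with the fact that the complex algebra $\dalg(Z)$ of the constructed CKL$^{-}$-frame lies in $\acklm$ and is equivalent to $Z$ as a model (Theorem~\ref{modelduality}). Your extra remarks (e.g.\ that the Lindenbaum algebra is not complete, forcing the detour through the Q-filter frame) just make explicit what the paper leaves implicit.
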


\begin{proof}
Soundness is proved by induction on 
the height of the derivations in $\psqcklm$. 
Completeness follows since the Lindenbaum algebra  
of $\qcklm$ is in $\acklm$. 
\end{proof}

\begin{corollary}\label{corcebarcan}
The formula 
\begin{equation}\label{cebarcan}
\cko p\supset \eko\cko p
\end{equation}
is not in $\qcklm$. 
\end{corollary}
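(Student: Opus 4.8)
The plan is to exploit the algebraic completeness of $\qcklm$ from Corollary~\ref{qcklmacomp}: by soundness, if \eqref{cebarcan} were a theorem of $\qcklm$ it would be valid in every CKL$^{-}$-algebra, so it suffices to exhibit one $A\in\acklm$ in which \eqref{cebarcan} fails (equivalently, a CKL$^{-}$-frame, via Theorem~\ref{qcklmncomp}). The conceptual point driving the construction is that in a CKL$^{-}$-algebra the operator $\eko$ is only required to be monotone and to preserve finite meets and $1$; it need not preserve the countable meet $\iand_{n\in\omega}\eko^{n}x$ that defines $\cko x$, and exactly this failure of preservation is what refutes $\cko x\leq\eko\cko x$. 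So the real work is to design an $\eko$ for which this happens.

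Concretely, I would take $A$ to have underlying Boolean algebra $\power(\omega)$ and define $\eko X$ to be the shifted set $\{n+1\mid n\in X\}$, together with the extra point $0$ precisely when $\omega\setminus X$ is finite. (In neighborhood-frame language: the $\eko$-neighborhood of $0$ is the filter of cofinite subsets of $\omega$, and the $\eko$-neighborhood of each $k\geq 1$ is the principal ultrafilter at $k-1$.) The first task is the routine verification that $\eko$ is monotone, that $\eko(X\cap Y)=\eko X\cap\eko Y$, and that $\eko\omega=\omega$. Then, putting $\cko X:=\iand_{n\in\omega}\eko^{n}X$, it follows that $A=\langle\power(\omega);\cup,\cap,\omega\setminus{-},\eko,\cko,\emptyset,\omega\rangle$ is a complete modal algebra in $\acklm$: the required instances of $\prmt$, $\prtp$, $\prcf$ for $\cko$ are immediate from the corresponding facts for $\eko$ together with the fact that $\eko$ preserves finite meets.

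Finally I would produce the witnessing valuation. Put $X=\omega\setminus\{1\}$. Since $X$ is cofinite, a straightforward induction gives $\eko^{n}X=\omega\setminus\{n+1\}$ for every $n\in\omega$, hence $\cko X=\iand_{n\in\omega}(\omega\setminus\{n+1\})=\{0\}$. As $\{0\}$ is not cofinite, $\eko\cko X=\eko\{0\}=\{1\}$, so $\cko X=\{0\}\not\subseteq\{1\}=\eko\cko X$; interpreting the propositional variable $p$ as $X$ therefore refutes \eqref{cebarcan} in $A$, and Corollary~\ref{qcklmacomp} yields that \eqref{cebarcan} is not in $\qcklm$. I do not anticipate a serious obstacle: the only point that needs genuine care is the three-case check that $\eko$ preserves binary intersections (according as $X,Y$ are both, exactly one of, or neither of them cofinite), together with the observation that the argument must exploit non-normality — if $\eko$ were a completely multiplicative (Kripke) modality then $\iand_{n\in\omega}\eko^{n}x\leq\iand_{n\in\omega}\eko^{n+1}x=\eko\cko x$ would hold and \eqref{cebarcan} could not be refuted, which is exactly why the passage from Kripke to neighborhood semantics is what makes the corollary possible.
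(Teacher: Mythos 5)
Your proposal is correct and follows essentially the same route as the paper: both reduce, via the soundness half of Corollary~\ref{qcklmacomp}, to exhibiting a concrete CKL$^{-}$-algebra in which $\cko x\not\leq\eko\cko x$, built from a shift-like, non-completely-multiplicative $\eko$ with a single ``limit'' point where the countable meet defining $\cko$ escapes. The only difference is the particular algebra ($\power(\omega)$ with the successor shift and cofinite behavior at $0$, versus the paper's $2^{\omega+\omega}$), and your verification of the CKL$^{-}$-algebra conditions and of the failing valuation $X=\omega\setminus\{1\}$, giving $\cko X=\{0\}\not\subseteq\{1\}=\eko\cko X$, checks out.
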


\begin{proof}
By Corollary \ref{qcklmacomp}, it is enough to show that 
there exists a CKL$^{-}$-algebra $A$ that does not validate \eqref{cebarcan}. 
Let $2$ be the two-element Boolean algebra. 
Define $A=2^{\omega+\omega}$. 
We write $1_{A}$ and $0_{A}$ for the greatest and the least elements of $A$, 
respectively. 
For each $x\in A$, define the set $N_{x}\subseteq\omega+\omega$ by 
$$
N_{x}=\{\alpha\mid x(\alpha)=0\},    
$$
and for each $x\in A$ such that $N_{x}$ is not cofinal in $\omega+\omega$, 
define $n_{x}\in\omega+\omega$ by 
$$
n_{x}=\min\{\alpha\mid \forall \beta\geq\alpha\left(x(\beta)=1\right)\}. 
$$
Define the unary operator $\eko$ on $A$ as follows: 
$\eko 1_{A}=1_{A}$; 
if $N_{x}$ is cofinal in $\omega+\omega$, then $\eko x=0_{A}$; 
otherwise, 
$$
\eko x(\alpha)=
\begin{cases}
0 & \text{($\alpha\leq n_{x}$)}\\
1 & \text{(otherwise)} 
\end{cases}.
$$
Define 
$\cko x=\iand_{n\in\omega}\eko^{n}x$ for any $x\in A$. 
We claim that $A$ is a CKL$^{-}$-algebra. 
First, we show that $\eko(x\land y)=\eko x\land \eko y$ for any $x$ and $y$ in $A$. 
The case that $x=1_{A}$ or $y=1_{A}$ is straightforward. 
Suppose that $N_{x}$ or $N_{y}$ is 
cofinal in $\omega+\omega$. Then, $N_{x\land y}$ is also cofinal in 
$\omega+\omega$. Therefore, 
$$
\eko(x\land y)=0_{A}=\eko x\land \eko y. 
$$
Suppose otherwise. 
Then, $N_{x\land y}$ is not cofinal in $\omega+\omega$ and 
$n_{x\land y}=\max\{n_{x},n_{y}\}$. Hence, 
$$
\eko(x\land y)(\alpha)=\eko x(\alpha)\land\eko y(\alpha)
=
\begin{cases}
0 & \text{($\alpha\leq \max\{n_{x},n_{y}\}$)}\\
1 & \text{(otherwise)}
\end{cases}. 
$$
Next, we prove
that $\cko(x\land y)=\cko x\land \cko y$ for any $x$ and $y$ in $A$. 
We only show the case that both $N_{x}$ and $N_{y}$ are not cofinal 
in $\omega+\omega$. If $n_{x}\geq\omega$ or $n_{y}\geq\omega$, 
then $n_{x\land y}\geq\omega$. 
Hence, 
$$
\cko(x\land y)=0_{A}=\cko x\land \cko y. 
$$
If not, $n_{x\land y}\in\omega$ and 
$$
\cko(x\land y)(\alpha)=\cko x(\alpha)\land\cko y(\alpha)
=
\begin{cases}
0 & \text{($\alpha<\omega$)}\\
1 & \text{(otherwise)}
\end{cases}. 
$$
This completes the proof of the claim. 
We show that $A$ does not satisfy \eqref{cebarcan}. 
Let $x\in A$ be  
$$
x(\alpha)=
\begin{cases}
0 & \text{($\alpha=1$)}\\
1 & \text{(otherwise)}
\end{cases}. 
$$
Then, 
$$
\cko x(\alpha)=
\begin{cases}
0 & \text{($\alpha<\omega$)}\\
1 & \text{(otherwise)}
\end{cases}, 
\hspace{10pt}
\eko\cko x(\alpha)=
\begin{cases}
0 & \text{($\alpha<\omega+1$)}\\
1 & \text{(otherwise)}
\end{cases}. 
$$
\end{proof}

Next, we show the Kripke incompleteness of $\qcklm$.

\begin{lemma}\label{cklkripke}
Let $Z$ be a Kripke frame. 
If $Z\vl\qcklm$ then: 
\begin{enumerate}
\item
$\cko X=\bigcap_{n\in\omega}\eko^{n} X$ for any $X\in \dalg(Z)$; 
\item
$Z\vl \cko p\supset\eko\cko p$. 
\end{enumerate}
\end{lemma}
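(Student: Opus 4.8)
The plan is to derive part~(1) from two facts about $\qcklm$: the explicit axiom $\cko p\supset\eko^{n}p$, which immediately gives $\cko_{Z}X\subseteq\bigcap_{n}\eko_{Z}^{n}X$, and the common-knowledge induction principle $\cko(p\supset\eko p)\land p\supset\cko p$, which I claim is a theorem of $\qcklm$ and which gives the reverse inclusion. For the first, fix $X\in\dalg(Z)$, i.e.\ $X\subseteq C$, and interpret $p$ by $X$; since $\cko p\supset\eko^{n}p\in\qcklm$ and $Z\vl\qcklm$, we get $\cko_{Z}X\subseteq\eko_{Z}^{n}X$ for every $n\in\omega$, hence $\cko_{Z}X\subseteq\bigcap_{n}\eko_{Z}^{n}X$.

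For the reverse inclusion I first check that $\cko(p\supset\eko p)\land p\supset\cko p$ is provable in $\psqcklm$, even though that system contains only the $k=0$ instance \eqref{cright1} of the $\omega$-rule. One shows by induction on $n$ that $\cko(p\supset\eko p)\land p\supset\eko^{n}p$ is provable: the case $n=0$ is a tautology, and the induction step combines the hypothesis with the axiom $\cko(p\supset\eko p)\supset\eko^{n}(p\supset\eko p)$ and the normality of $\eko^{n}$ (whence $\eko^{n}(p\supset\eko p)\land\eko^{n}p\supset\eko^{n+1}p$). Applying \eqref{cright1} with $\gamma=\cko(p\supset\eko p)\land p$ and $\phi=p$ then yields the principle, and $Z\vl\qcklm$ makes it valid on $Z$.

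Next I use that $Z=\langle C,\nsystem_{\eko},\nsystem_{\cko}\rangle$ is a Kripke frame: by Definition~\ref{neighborhoodframes} there are accessibility relations $R_{\eko},R_{\cko}$ with $\eko_{Z}Y=\{c\mid R_{\eko}(c)\subseteq Y\}$ and $\cko_{Z}Y=\{c\mid R_{\cko}(c)\subseteq Y\}$ for all $Y\subseteq C$; in particular $c\in\bigcap_{n}\eko_{Z}^{n}X$ iff $R_{\eko}^{\ast}(c)\subseteq X$, where $R_{\eko}^{\ast}=\bigcup_{n}R_{\eko}^{n}$. Fix such a $c$, set $Y=R_{\eko}^{\ast}(c)$, and interpret $p$ by $Y$. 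Since $Y$ is closed under $R_{\eko}$-successors, $p\supset\eko p$ holds at every point of $Z$ under this interpretation, so $\cko(p\supset\eko p)$ holds everywhere, while $p$ holds at $c$ because $c\in Y$. Validity of the induction principle on $Z$ then forces $c\in\cko_{Z}Y$, i.e.\ $R_{\cko}(c)\subseteq Y=R_{\eko}^{\ast}(c)\subseteq X$, so $c\in\cko_{Z}X$. This gives $\bigcap_{n}\eko_{Z}^{n}X\subseteq\cko_{Z}X$ and completes part~(1).

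Part~(2) then follows from part~(1) and the complete multiplicativity of $\eko_{Z}$ (again because $Z$ is a Kripke frame): for every $X\in\dalg(Z)$, $\eko_{Z}\cko_{Z}X=\eko_{Z}\bigl(\bigcap_{n}\eko_{Z}^{n}X\bigr)=\bigcap_{n}\eko_{Z}(\eko_{Z}^{n}X)=\bigcap_{n}\eko_{Z}^{n+1}X\supseteq\bigcap_{n}\eko_{Z}^{n}X=\cko_{Z}X$, so $\cko_{Z}X\subseteq\eko_{Z}\cko_{Z}X$, which is precisely $Z\vl\cko p\supset\eko\cko p$. The main obstacle is the reverse inclusion in part~(1): one must notice that the induction principle is already derivable from the lone $k=0$ $\omega$-rule of $\psqcklm$, and then apply it to the valuation $p\mapsto R_{\eko}^{\ast}(c)$; this is where being a Kripke frame is used essentially, since that is what makes $\bigcap_{n}\eko_{Z}^{n}$ the box of a relation and $\cko_{Z}$ the box of a relation — for a general neighborhood frame validating $\qcklm$ the conclusion of~(2) need not hold (Corollary~\ref{corcebarcan}).
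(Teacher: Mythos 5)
Your proposal is correct and follows essentially the same route as the paper: both derive the induction principle $\cko(p\supset\eko p)\supset(p\supset\cko p)$ in $\psqcklm$ from the $k=0$ $\omega$-rule by the same finite induction, and both obtain (2) from (1) via complete multiplicativity of $\dalg(Z)$. The only (cosmetic) difference is in the reverse inclusion of (1): the paper instantiates $p$ algebraically by $Y=\bigcap_{n\in\omega}\eko^{n}X$ and uses complete multiplicativity to get $Y\subseteq\eko Y$, whereas you instantiate $p$ pointwise by $R_{\eko}^{\ast}(c)$ and read the modal operators relationally.
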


\begin{proof}
(1): 
We first prove that the formula 
\begin{equation}\label{mhformula}
\cko (p\supset\eko p)\supset (p\supset\cko p)
\end{equation}
is derivable in $\psqcklm$. 
By the $\omega$-rule \eqref{cright1}, 
it is sufficient to show that for each $k\in\omega$,  
\begin{equation}\label{lemmainduction}
\left(p\land\cko (p\supset\eko p)\right)
\supset 
\eko^{k} p 
\end{equation}
is derivable in $\psqcklm$. 
We prove \eqref{lemmainduction} by induction on $k\in\omega$. 
The base step is trivial.  
Suppose that \eqref{lemmainduction} is derivable for every $k\leq m$. 
By (3) of Definition \ref{psqckl}, 
\begin{equation}\label{lemmainduction2}
\left(p\land\cko (p\supset\eko p)\right)
\supset 
\eko^{m} (p\supset\eko p)
\end{equation}
is derivable in $\psqcklm$. Hence, by the induction hypothesis and 
\eqref{lemmainduction2},
\begin{equation*}
\left(p\land\cko (p\supset\eko p)\right)
\supset 
\eko^{m+1} p 
\end{equation*}
is derivable in $\psqcklm$. 
This proves \eqref{lemmainduction}. Hence, 
\eqref{mhformula}
is derivable in $\psqcklm$. 
Let $X\in \dalg(Z)$ and $Y=\bigcap_{n\in\omega}\eko^{n}X$. 
Take a propositional variable  $p$  and an interpretation $\nint$ on $Z$
such that 
$\nint(p)=Y$. 
Since $\qcklm$ is valid in $\dalg(Z)$ and includes the 
formula \eqref{mhformula}, 
\begin{equation}\label{algmh}
\cko(Y\supset \eko Y)\subseteq Y\supset\cko Y. 
\end{equation}
Since $Z$ is a Kripke frame, $\dalg(Z)$ is completely multiplicative. Hence, 
$$
Y
=
\bigcap_{n\in\omega}\eko^{n}X
\subseteq
\bigcap_{n\in\omega}\eko^{n+1}X
=
\eko\bigcap_{n\in\omega}\eko^{n}X
=
\eko Y.
$$
Hence, $Y\supset\eko Y$ is the top element of $\dalg(Z)$. 
By \eqref{algmh}, 
we have $Y\subseteq \cko Y$. Hence, 
$$
\bigcap_{n\in\omega}\eko^{n}X
=
Y
\subseteq
\cko Y
=
\cko\bigcap_{n\in\omega}\eko^{n}X
\subseteq
\cko X. 
$$
The converse is straightforward.

\noindent 
(2): For any $X\in \dalg(Z)$, we have 
$\cko X\subseteq\eko\cko X$,  
by (1) and complete multiplicativity of $\dalg(Z)$. 
Hence, 
$Z\vl \cko p\supset\eko\cko p$. 
\end{proof}

\begin{theorem}
$\qcklm$ is Kripke incomplete. 
\end{theorem}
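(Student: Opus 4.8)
The plan is to argue by contradiction, leaning on the two facts already established in this section: Corollary~\ref{corcebarcan}, which tells us that $\cko p\supset\eko\cko p$ is not a theorem of $\qcklm$, and Lemma~\ref{cklkripke}(2), which tells us that every Kripke frame validating $\qcklm$ also validates $\cko p\supset\eko\cko p$. Recall that, as in the case of $\logic{QCKL}$, calling $\qcklm$ Kripke complete means that there is a class of Kripke frames with respect to which $\qcklm$ is sound and complete; so Kripke incompleteness is the assertion that no such class exists.

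First I would assume, for contradiction, that $\qcklm$ is Kripke complete, and fix a class $\mathcal{C}$ of Kripke frames such that $\qcklm$ is sound and complete with respect to $\mathcal{C}$, i.e.\ $\qcklm=\defformula(\mathcal{C})$. Soundness gives $\qcklm\subseteq\defformula(\mathcal{C})$, so every $Z\in\mathcal{C}$ satisfies $Z\vl\qcklm$. Applying Lemma~\ref{cklkripke}(2) to each such $Z$ yields $Z\vl\cko p\supset\eko\cko p$ for every $Z\in\mathcal{C}$, hence $\cko p\supset\eko\cko p\in\defformula(\mathcal{C})$. By completeness, $\defformula(\mathcal{C})\subseteq\qcklm$, so $\cko p\supset\eko\cko p\in\qcklm$, contradicting Corollary~\ref{corcebarcan}. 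This completes the argument.

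Since the substantive content has already been placed in Lemma~\ref{cklkripke} and Corollary~\ref{corcebarcan}, the remaining work is only the bookkeeping of combining soundness, completeness, and the lemma. The one point that deserves a line of care is that ``Kripke incomplete'' negates the existence of \emph{any} characterizing class of Kripke frames, not merely the failure of $\defframe(\qcklm)$ to work; but because Lemma~\ref{cklkripke}(2) applies uniformly to every Kripke frame that validates $\qcklm$, an arbitrary candidate class $\mathcal{C}$ is handled exactly as above, so I do not foresee any genuine obstacle beyond assembling these pieces.
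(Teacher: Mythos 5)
Your argument is correct and is exactly the paper's proof, merely spelled out: the paper cites Corollary~\ref{corcebarcan} and Lemma~\ref{cklkripke}, and the contradiction you assemble (any sound-and-complete class of Kripke frames would force $\cko p\supset\eko\cko p$ into $\qcklm$) is the intended combination of those two facts. No gaps; your remark that the lemma applies uniformly to every Kripke frame validating $\qcklm$, so an arbitrary candidate class is handled, is the right point of care.
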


\begin{proof}
Corollary \ref{corcebarcan} and Lemma \ref{cklkripke}. 
\end{proof}

It is straightforward to adapt the preceding argument to 
show that the propositional fragment of $\qcklm$ is neighborhood 
complete but Kripke incomplete. 

\bibliographystyle{plain}
\bibliography{myref.sjis}
\end{document}